\documentclass[reqno]{amsart}

\usepackage[backend=biber, style=abbrv, sorting=none, bibstyle=numeric, citestyle=numeric, sorting=nyt,maxbibnames=99, doi=false, url=false]{biblatex}

\renewbibmacro{in:}{}
\bibliography{references.bib}
\usepackage{amssymb}
\usepackage{calrsfs}
\usepackage{graphics,graphicx, mathrsfs}
\usepackage{enumerate}
\usepackage{enumitem}
\usepackage{url}
\usepackage{xcolor}
\definecolor{vio}{rgb}{0.54, 0.17, 0.89}
\usepackage[ruled, lined, linesnumbered, longend]{algorithm2e}
\usepackage{hyperref}
\usepackage{titlesec}
\usepackage{siunitx}
\usepackage{caption}
\usepackage{longtable}
\usepackage[referable]{threeparttablex}
\allowdisplaybreaks

\newtheorem{theorem}{Theorem}[section]
\newtheorem{lemma}[theorem]{Lemma}

\numberwithin{equation}{section}

\theoremstyle{remark}
\newtheorem{remark}[theorem]{Remark}

\titleformat{\section}
  {\normalfont\large\bfseries\centering}{\thesection}{1em}{}

\titleformat{\subsection}
  {\normalfont\bfseries}{\thesubsection}{1em}{}  
\titleformat{\subsubsection}
  {\normalfont\bfseries}{\thesubsubsection}{1em}{}

\newcommand{\ceiling}[1]{\left\lceil#1\right\rceil}

\def\reals{\hbox{\rm I\kern-.18em R}}
\def\complexes{\hbox{\rm C\kern-.43em
\vrule depth 0ex height 1.4ex width .05em\kern.41em}}
\def\field{\hbox{\rm I\kern-.18em F}} %symbol for field

\newenvironment{section*}[2][A]{
  \section*{#2}
  \renewcommand\thesection{#1}
  \setcounter{theorem}{0}}{}

\begin{document}

\title[Divisor Problem]{On the Summatory Function of $d_3(n)$}

\author{Sebastian Tudzi}
\address{School of Science, UNSW Canberra, Australia}
\email{s.tudzi@unsw.edu.au}
\date\today
\subjclass[2020]{Primary: 11N99}
\keywords{ Divisor function, moments of the zeta-function, explicit results.}
\begin{abstract}
   In this article, we refine the method of our earlier work with N. Paloj{\"a}rvi to obtain a sharper explicit bound for the error term $\Delta_{3}(x)$ associated with the summatory function of $d_{3}(n)$. We prove that
    \begin{equation*}
 |\Delta_3(x)| <
\begin{cases}
0.6901\,x^{\frac{1}{2}}\log^{\frac{7}{2}}x, & 3.682\cdot 10^{31}\le x < 4.133\cdot 10^{87},\\[4pt]
0.2067\,x^{\frac{1}{2}}\log^{\frac{7}{2}}x, & 4.133\cdot 10^{87} \le x < 1.597\cdot 10^{98},\\[4pt]
0.1947\,x^{\frac{1}{2}}\log^{\frac{7}{2}}x, & x \ge 1.597\cdot 10^{98}.
\end{cases}
\end{equation*}
These explicit results improve the exponent of $x$ from $2/3$, due to Tudzi, and $859/1400$, due to Paloj{\"a}rvi  and Tudzi, to $1/2$, giving the best known bound for all $x\ge 3.682\cdot 10^{31}$.
\end{abstract}

\maketitle
%\blfootnote{\textit{Affiliation}: School of Science, The University of New South Wales Canberra, Australia}
%\blfootnote{\textit{Author}:Neea Paloj{\"a}rvi (n.palojarvi@unsw.edu.au) and Sebastian Tudzi (s.tudzi@unsw.edu.au)}
%\blfootnote{\textit{Key phrases}: Divisor function, analytic approach, explicit results.}
%\blfootnote{\textit{MSC classes}: 11N99}

\section{Introduction}
A classical problem in analytic number theory concerns the asymptotic behaviour of the summatory function
\[
T_3(x) := \sum_{n \leq x} d_3(n),
\]
where $d_3(n)$ represents the number of ways to write $n$ as a product of three positive integers. It is well known that
\begin{equation}\label{eq:asymptotic}
T_3(x) = x\!\left(\frac{(\log x)^2}{2} + (3\gamma - 1)\log x + 3\gamma^2 - 3\gamma - 3\gamma_1 + 1\right) + \Delta_3(x),
\end{equation}
where the main term on the right-hand side of \eqref{eq:asymptotic} is denoted by $xP_{3}(\log x)$, $\gamma$ and $\gamma_1$ are the Euler--Mascheroni constant and the first Stieltjes constant respectively, and $\Delta_3(x)$ is the corresponding error term (see \cite{MR2257286} and \cite{MR5085189}). It is conjectured that $\Delta_3(x)=O(x^{1/3+\varepsilon})$ for every $\varepsilon>0$ (see \cite[p.~320]{MR882550}). Motivated in part by this conjecture, considerable attention has been devoted to obtaining explicit upper bounds for $\Delta_{3}(x)$.

Several authors have established explicit results for the error term in \eqref{eq:asymptotic} using both elementary methods, such as the Dirichlet convolution method, and analytic techniques. Using the Dirichlet convolution method, Bordell{\'e}s \cite[Lemma~3.2]{MR2257286} and Tudzi \cite[Theorem~1.1]{MR5085189} proved respectively that
\begin{equation}\label{Bord}
    |\Delta_{3}(x)| \leq 2.36\, x^{2/3} \log x,\quad x>670
\end{equation}
and
\begin{equation}\label{eq:Tudzi}
|\Delta_3(x)| < 2.968\, x^{2/3} \log^{1/3} x, \quad x\ge 2.
\end{equation}
The current best known explicit result in \eqref{eq:Tudzi} holds on a wider range of $x$ than \eqref{Bord}, and its logarithmic saving makes it sharper than \eqref{Bord} throughout the latter's entire range of validity, despite its larger constant.

In a recent article, Paloj{\"a}rvi and Tudzi \cite{palojarvi2026explicit} employed analytic methods to obtain  explicit estimates for $\Delta_{k}(x)$, where $k\ge 2$.  For the  case $k=3$, they combined second moment estimates for the Riemann zeta function on the critical line with bounds for $\zeta(1/2 + it)$ to derive explicit upper bounds for $\Delta_{3}(x)$ in different ranges of $x$. More precisely, the main results for $\Delta_{3}(x)$ in \cite[Table 7]{palojarvi2026explicit} are as follows:
\begin{equation}\label{nana11}
|\Delta_3(x)| <
\begin{cases}
3.6127\,x^{\frac{93}{140}}\log^{3}x, & 176749\le x < 4.1332\cdot 10^{87},\\[4pt]
0.2797\,x^{\frac{13}{21}}\log^{3}x, & 4.1332\cdot 10^{87} \le x < 1.6\cdot 10^{98},\\[4pt]
0.2475\,x^{\frac{859}{1400}}\log^{3}x, & x \ge 1.6\cdot 10^{98}.
\end{cases}
\end{equation}
 From this, we observe that for $2\le x<1.601\cdot 10^{
98}$, the bound in \eqref{eq:Tudzi} remains the strongest known estimate, while for $x\ge1.601\cdot 10^{98}$,
 the final bound in \eqref{nana11} provides a sharper result, improving upon all previously known estimates in this range.

In the present paper, we adopted the approach similar to that in \cite{palojarvi2026explicit}, with suitable modifications and improvements. We improve upon the existing explicit estimates by employing the third moment estimate for the Riemann zeta function on the critical line, combined with convexity bounds for $\zeta(s)$.  We also provide new estimates for $|\zeta(\sigma+it)|$ with $\sigma\in [1/2,c]$ for some $c>1$. 

The paper is organised as follows. In Section \ref{sec:PreBound}, we provide estimates for the relevant divisor sums by treating separately the cases where $x$ is a half-integer and $x$ is an integer. Also, we include estimate for $\zeta(s)$ in the critical strip. In Section \ref{sec:momentEstimates}, we establish integral estimates using pointwise bounds and third moment estimates for $\zeta(1/2+it)$. Section \ref{sec:Effective} presents the effective version of the  result, while Section \ref{Proof} contains the proof of the main theorem. Finally, Section \ref{Conc} provides concluding remarks.

The following theorem provides the sharpest bound that can be obtained using our method. A variant with weaker constants extends the validity of the bound to a wider range of $x$, which we omit here.
\begin{theorem}\label{Final}
    We have 
   \begin{equation*}
   |\Delta_3(x)| <
\begin{cases}
0.6901\,x^{\frac{1}{2}}\log^{\frac{7}{2}}x, & 3.682\cdot 10^{31}\le x < 4.133\cdot 10^{87},\\[4pt]
0.2067\,x^{\frac{1}{2}}\log^{\frac{7}{2}}x, & 4.133\cdot 10^{87} \le x < 1.597\cdot 10^{98},\\[4pt]
0.1947\,x^{\frac{1}{2}}\log^{\frac{7}{2}}x, & x \ge 1.597\cdot 10^{98}.
\end{cases}
\end{equation*}
\end{theorem}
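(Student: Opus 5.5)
The plan is to use the truncated Perron formula for $\zeta^3(s)$ and then move the line of integration to the critical line. I will work with $x$ half-integer first, and pass to general $x$ at the end using the divisor-sum estimates of Section \ref{sec:PreBound}. These estimates cover both the integer and half-integer cases and let us replace $T_3(x)$ by $T_3$ at a nearby half-integer. The error of this replacement is controlled by $\max_{x<n\le x+1} d_3(n)$ or by a short-interval sum. Both are $O(x^{\varepsilon})$ and hence negligible against $x^{1/2}\log^{7/2}x$.

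Concretely, take $c=1+1/\log x$ and a truncation height $T$ to be chosen later. Explicit Perron gives
\[
T_3(x)=\frac{1}{2\pi i}\int_{c-iT}^{c+iT}\zeta^3(s)\frac{x^s}{s}\,ds+R_1,
\qquad
R_1\ll \frac{x^c}{T}\sum_{n}\frac{d_3(n)}{n^{c}|\log(x/n)|}.
\]
The term $R_1$ is bounded explicitly by the divisor-sum lemmas. Typically this gives $R_1\ll x\log^2 x/T$, plus a contribution from $n$ near $x$ of size roughly $x^{1+\varepsilon}/T$ once half-integers are used.

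Next we shift the contour to $\operatorname{Re}s=1/2$. The residue at $s=1$ gives exactly $xP_3(\log x)$. Two horizontal segments $\sigma\in[1/2,c]$, $t=\pm T$ remain. On them we use the paper's new bounds for $|\zeta(\sigma+it)|$ in $[1/2,c]$, a convexity-type interpolation between the critical-line bound and the bound near $\sigma=1$. The horizontal integrals are then at most
\[
\max_{\sigma}\frac{x^\sigma|\zeta(\sigma+iT)|^3}{T}.
\]

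The vertical integral is
\[
\frac{x^{1/2}}{2\pi}\int_{-T}^{T}\frac{|\zeta(1/2+it)|^3}{|1/2+it|}\,dt.
\]
We handle it with the integral estimates of Section \ref{sec:momentEstimates}. The range $|t|\le t_0$ is treated by pointwise bounds. The rest is split dyadically, and on each dyadic block we apply the explicit third-moment bound: Hölder between the second and fourth moments gives
\[
\int_{U}^{2U}|\zeta(1/2+it)|^3\,dt\ll U\log^{5/2}U,
\]
using $\int|\zeta|^2\sim U\log U$ and $\int|\zeta|^4\sim U\log^4 U/(2\pi^2)$. Summing the dyadic blocks with weight $1/U$ yields $\ll \log^{7/2}T$. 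The vertical contribution is thus $\ll x^{1/2}\log^{7/2}T$, which is the source of the exponent $7/2$.

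Finally we choose $T$ as a power of $x$, around $T=x^{1/2}$ or slightly larger, so that $R_1$ and the horizontal terms are $\le x^{1/2}$ times a lower power of $\log x$. With $\log T\approx\theta\log x$, the leading constant becomes $\theta^{7/2}$ times the moment constant. The three ranges of $x$ correspond to the different explicit critical-line bounds for $\zeta(1/2+it)$ valid in different $t$-ranges, namely $t\approx x^{\theta}$ crossing $4.133\cdot 10^{87}$ and similar thresholds. These bounds determine the horizontal-segment bound and hence the admissible $\theta$. For each range I would optimise $\theta$ numerically and absorb all lower-order terms into the constant for $x$ at the left endpoint, using monotonicity of the ratio of each term to $x^{1/2}\log^{7/2}x$.

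The main obstacle is the horizontal segments. There $|\zeta(\sigma+iT)|^3$ with only subconvex or convex explicit bounds must beat $x^{1/2}$ uniformly in $\sigma$. This forces $T$ to be large, since $x^\sigma T^{3\mu(\sigma)-1}$ must be at most $x^{1/2}$ at both ends, while $R_1\approx x/T$ forces $T\ge x^{1/2}$. Balancing these two constraints with explicit constants, together with the lower threshold $3.682\cdot 10^{31}$, is where the real work lies. I would first try $T=x^{1/2}\log^{A}x$ and check both endpoints of $\sigma$ numerically.
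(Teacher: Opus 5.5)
Your overall architecture matches the paper's: truncated Perron, a shift to $\Re s=\tfrac12$, convexity bounds on the horizontal segments, and a H\"older-derived third moment giving $0.304\,x^{1/2}\log^{7/2}T$ on the critical line. The gap is in the step where you fix $T$. You note that the terms $n$ near $x$ in the Perron error contribute about $x^{1+\varepsilon}/T$, but you then drop them and balance only $x/T$ against the horizontal segments.

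These near-diagonal terms are in fact the binding constraint. For half-integer $x$, each of $n=x\pm\tfrac12$ contributes about $2x\,d_3(n)/T$. Bounding $x/a\le n\le ax$ pointwise gives roughly $(x\log x/T)\max_{n\asymp x}d_3(n)$. Since $d_3(n)$ is not bounded by any fixed power of $\log n$, no choice $T=x^{1/2}\log^{A}x$ keeps this error bound below $Cx^{1/2}\log^{7/2}x$ for all $x\ge 1.597\cdot 10^{98}$. Getting around this would need a $\min\{1,(T|\log(x/n)|)^{-1}\}$ form of Perron together with an explicit short-interval bound for $\sum d_3(n)$, and you supply neither. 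So the constant $0.304\,\theta^{7/2}$ with $\theta\approx\tfrac12$ is out of reach.

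The available input is $d_3(n)\le n^{\varepsilon}$ for $n\ge x_0(\varepsilon)=\exp(\exp(1.5914/\varepsilon))$. With it you are forced to take $T\ge x^{1/2+\varepsilon_1}$ with $\varepsilon_1>\varepsilon$. This is exactly the paper's choice: $T=\kappa_1x^{1/2+\varepsilon_1}(\log x)^{-1/2}$ with $c=1+\varepsilon_1$. The near-diagonal term is then $O(x^{1/2+\varepsilon-\varepsilon_1}\log^{3/2}x)$. The horizontal term at $\sigma=c$ is of exact order $x^{1/2}\log^{7/2}x$ and is traded off against the vertical term via $\kappa_1$.

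Consequently your account of the three ranges is wrong. They do not come from different critical-line bounds valid in different $t$-ranges. The $\zeta$-inputs ($0.611\,t^{1/6}\log t$, $1.546\,t^{1/14}\log t$, and the third moment for $T\ge 10^7$) are the same throughout. Instead, the thresholds are $x_1\approx 1.6\,x_0(\varepsilon)$ for $\varepsilon=0.371855,\ 0.3,\ 0.293671$, paired with $\varepsilon_1=0.38,\ 0.31,\ 0.30$. A larger $x_1$ permits a smaller $\varepsilon$, hence a smaller factor $(\tfrac12+\varepsilon_1)^{7/2}$. At $x_1=3.682\cdot 10^{31}$ the constant is inflated mainly by the term $1.240\cdot 10^{6}/\log^{7/2}x_1$ coming from $t\le 10^7$.

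Likewise, the horizontal segments are not the main obstacle; once $T\ge x^{1/2+\varepsilon_1}$ they are harmless. Your $c=1+1/\log x$ is otherwise workable, and with $T\asymp x^{1/2+\varepsilon_1}$ it would even make the horizontal and far-range Perron terms lower order. However, the paper's divisor-sum lemma is stated only for fixed $c\in(1+\varepsilon,18.590)$, so you would need to recheck it. Its far-range part is then of size $\asymp\zeta(c)^3\asymp\log^3 x$, not $\log^2 x$.
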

These bounds go beyond a numerical improvement in the exponent of $x$ over the bounds in \eqref{eq:Tudzi} and \eqref{nana11}, at a modest cost of the power of $\log x$. These improvements hold for all $x\ge 3.682\cdot 10^{31}$ and constitute the strongest explicit bounds currently known in this range.

\section{Preliminary estimates}
This section gathers the preliminary estimates needed in the proofs of our main results. We also include bounds for the Riemann zeta-function in the critical strip, which will play a central role in the estimation of the integrals arising in our arguments. 
\label{sec:PreBound}
\subsection{Integer and half-Integer cases}
Here, we establish an estimate for studying the behaviour of $T_3(x)$, with the argument split according to whether $x$ is an integer or half-integer.
\begin{lemma}
\label{lemma:TkIntegral}(\cite[Lemma 2.1]{palojarvi2026explicit}, $k=3$)
Assume $T>c>0$. If $x$ is a half-integer, then we have
\begin{equation}
\label{eq:TkSum}
    \left|T_3(x)-\frac{1}{2\pi i}\int_{c-iT}^{c+iT} \zeta^3(w)\frac{x^w}{w} \, \mathrm{d}w\right| <\frac{1+\pi}{\pi}\cdot\frac{x^c}{T}\sum_{n\ge1} \frac{d_3(n)}{n^c \left|\log{\frac{x}{n}}\right|}. 
\end{equation}
Moreover, if $x$ is an integer, then we have
\begin{multline}
\label{eq:integerdSumFirst}
    \left|T_3(x)-\frac{1}{2\pi i}\int_{c-iT}^{c+iT} \zeta^3(w)\frac{x^w}{w} \, \mathrm{d}w\right| \\
    <\frac{1+\pi}{\pi}\cdot\frac{x^c}{T}\sum_{\substack{n\ge1 \\ n \neq x}} \frac{d_3(n)}{n^c \left|\log{\frac{x}{n}}\right|}+\frac{d_3(x)}{2\pi}\left(\pi+\frac{2Tc}{T^2-c^2}\right). 
\end{multline}
\end{lemma}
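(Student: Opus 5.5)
This is an explicit truncated Perron formula, and I will prove it term by term. For $c>1$ the Dirichlet series $\zeta^3(w)=\sum_{n\ge1} d_3(n)n^{-w}$ converges absolutely on the segment, so I can interchange sum and integral:
\[
\frac{1}{2\pi i}\int_{c-iT}^{c+iT}\zeta^3(w)\frac{x^w}{w}\,\mathrm{d}w=\sum_{n\ge1} d_3(n)\, I_n,\qquad I_n:=\frac{1}{2\pi i}\int_{c-iT}^{c+iT}\frac{y_n^w}{w}\,\mathrm{d}w,\quad y_n=\frac{x}{n}.
\]
(If $0<c\le1$ is intended, the sum on the right of \eqref{eq:TkSum} diverges and the claim is vacuous, so I may assume $c>1$.) The task then reduces to the single-term estimate for $\delta(y)=1$ if $y>1$ and $0$ if $y<1$:
\[
|I(y)-\delta(y)|\le \frac{1+\pi}{\pi}\cdot\frac{y^c}{T|\log y|}\quad (y\ne1).
\]
Summing this against $d_3(n)$ gives $T_3(x)=\sum_n d_3(n)\delta(x/n)$ when $x$ is a half-integer, since then no $n$ equals $x$.

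For $y>1$, I close the contour to the left with the rectangle through $-U+iT$ and $-U-iT$ and let $U\to\infty$. The residue at $w=0$ is $1$. On each horizontal side I bound
\[
\frac{1}{2\pi}\int_{-\infty}^{c}\frac{y^{\sigma}}{T}\,\mathrm{d}\sigma=\frac{y^c}{2\pi T\log y},
\]
so the two sides together contribute $y^c/(\pi T\log y)$, and the vertical side vanishes in the limit. For $y<1$, I close to the right, where there is no pole, and obtain the same bound with $|\log y|$. This gives the constant $1/\pi$. The stated constant $(1+\pi)/\pi=1+1/\pi$ is larger, so the simple contour bound suffices; the paper's extra $1$ probably comes from a cruder treatment of the horizontal integrals, and in any case the inequality I get is stronger. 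Since $1/\pi<(1+\pi)/\pi$, strict inequality also follows.

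For integer $x$, the terms with $n\ne x$ are handled exactly as above. The term $n=x$ has $y=1$, where $T_3(x)$ counts the full weight $d_3(x)$. I compute directly:
\[
I(1)=\frac{1}{2\pi}\int_{-T}^{T}\frac{\mathrm{d}t}{c+it}=\frac{1}{\pi}\arctan\frac{T}{c}.
\]
Then
\[
1-I(1)=\frac{1}{2}+\frac{1}{\pi}\arctan\frac{c}{T},
\]
and since $\arctan u\le u$ and $\frac{c}{T}\le \frac{Tc}{T^2-c^2}$ for $T>c$, this is at most $\frac{1}{2\pi}\bigl(\pi+\frac{2Tc}{T^2-c^2}\bigr)$. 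This is exactly the extra term in \eqref{eq:integerdSumFirst}; the paper may have derived it with a rectangle contour instead, but the bound agrees.

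The only delicate points are justifying the interchange of sum and integral, which holds by absolute convergence on a compact segment for $c>1$, and checking that the vertical sides vanish as $U\to\infty$. Neither should be a real obstacle. The main point of care is matching the stated constants, and as shown above they are weaker than what the argument yields.
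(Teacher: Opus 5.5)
Your proof is correct. It differs from the paper mainly in being a proof at all: the paper gives no argument of its own and imports the lemma by setting $k=3$ in Lemma~2.1 of Paloj\"arvi--Tudzi.

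You instead derive it directly by the classical Perron argument:
\begin{itemize}
\item you interchange the absolutely convergent series $\sum d_3(n)n^{-w}$ with the compact segment, which is fine for $c>1$, the only case used later ($c=1+\varepsilon_1$);
\item for each $y=x/n\neq 1$ you close the contour to the left or right, giving $|I(y)-\delta(y)|\le y^c/(\pi T|\log y|)$;
\item you evaluate the diagonal term exactly as $1-I(1)=\tfrac12+\tfrac1\pi\arctan(c/T)$.
\end{itemize}
Each step checks out, and the strict inequalities hold because the remaining sum is finite and positive. Your aside on $c\le 1$ is harmless. The only correction is that at $c=1$ the left-hand side is not even defined, since the segment passes through the triple pole of $\zeta^3$.

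Your route gives sharper constants:
\begin{itemize}
\item $1/\pi$ in front of the sum instead of $(1+\pi)/\pi$;
\item $\tfrac12+\tfrac1\pi\arctan(c/T)\le\tfrac12+\tfrac{c}{\pi T}$ for the $n=x$ term instead of $\tfrac12+\tfrac{Tc}{\pi(T^2-c^2)}$.
\end{itemize}
The factor $(1+\pi)/\pi$ is carried into $u_{10}$ and $u_{12}$. Using your version would therefore shrink those contributions to $\omega_1$ in Theorem~\ref{thm:EffectiveFourth} by a factor $1+\pi$. The citation, for its part, costs nothing in length and keeps the constants consistent with the earlier paper, on which the rest of the argument is modelled.
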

This follows from \cite[Lemma 2.1]{palojarvi2026explicit} upon setting $k=3$.

To estimate the sum on the right-hand side of \eqref{eq:TkSum}, we first define the functions $u_{i},\, i=1,\hdots,6,$ which will be used throughout the subsequent lemma.
We define
\begin{align}
&u_{1}(x,\varepsilon,c,a):= \frac{ (ax)^{\frac{1.5914\log 3}{\log\log (ax)}}}{(ax)^{c}}+\frac{c(ax)^{1-c}}{c-1}\Bigl(\frac{\log^{2}(ax)}{2}+\frac{(\alpha(c-1)+1)\log(ax)}{(c-1)}+\notag\\&\frac{(\beta(c-1)^{2}+\alpha(c-1)+1)}{(c-1)^{2}}\Bigr)+\frac{2.968(\log ax)^{\frac{1}{3}}}{(ax)^{c-2/3}(3c-2)}\left(\frac{9c}{(3c-2)(\log ax)^{\frac{5}{3}}}+3c\right),\label{u3}\\
    &u_{2}(x,\varepsilon,c,a):= \frac{1}{\log a}\Bigl(
  \frac{2.968\log^{\frac{1}{3}}2}{2^{c-2/3}(3c-2)}\left(\frac{3c}{(3c-2)\log^{\frac{5}{3}}2}+6c-2\right)+u_{1}(x,\varepsilon,c,a)+\notag\\&\quad\hspace{1.5cm}\frac{(c+1)\left(\frac{\log^{2}2}{2}+\alpha\log 2+\beta\right)}{2^{c}(c-1)}+\frac{2c((c-1)(\log 2+\alpha)+1)}{2^{c}(c-1)^{3}}+1
    \Bigr),\label{m1rr}\\
    &u_{3}(x,a):=\log\left(x-\frac{x}{a}-\frac{1}{2}\right)+\gamma+\frac{1}{(2-2/a)x-1},\label{m2rr}\\
     &u_{4}(x,a):=\log(x(a-1))+\gamma+\frac{1}{2(a-1)x},\label{m3rr}\\
     &u_{5}(x,a):=\log\left(x-\frac{x}{a}\right)+\gamma+\frac{1}{2(1-1/a)x}-\frac{1}{2}+\frac{1}{2a},\label{m4rr}\\
     &u_{6}(x,a):=\log(x(a-1))+\gamma+\frac{1}{2(a-1)x}+\frac{1}{2}.\label{m5rr}\\\notag
\end{align}
\begin{lemma}
\label{lemma:dkGeneral}
Let $\varepsilon\in(0,17.590)$, 
$c$ be a real number such that $c \in (1+\varepsilon, 18.590)$  and $a > 1$. Assume
$x > 3a$, $x-1/2\ge e^{e}$, and $x(1-1/a)\geq 3/2$.

If $x$ is a half-integer, then
\begin{align}\label{halfintbound}
    \sum_{n\ge1} \frac{d_3(n)}{n^c \left|\log{\frac{x}{n}}\right|}
    & <u_{2}(x,\varepsilon, c, a)
    +\left(2x-\frac{1}{2}\right)\left(\frac{x}{a}\right)^{\varepsilon-c}u_{3}(x,a)+\notag\\ &\quad(a+1)x\left(x+\frac{1}{2}\right)^{\varepsilon-c}u_{4}(x,a)+(4x-1)(x-1/2)^{\epsilon-c}.
\end{align}
If $x$ is an integer, then
\begin{align}\label{out}
    \sum_{\substack{n\ge1 \\ n \neq x}} \frac{d_3(n)}{n^c \left|\log{\frac{x}{n}}\right|}
    &< u_{2}(x,\varepsilon, c, a)
    + \left(x-\frac{1}{2}\right)\left(\frac{x}{a}\right)^{\varepsilon-c}u_{5}(x,a)+\notag\\ &\quad
    (x+1)^{\varepsilon-c}\left(x+\frac{1}{2}\right)u_{6}(x,a).
\end{align}
\end{lemma}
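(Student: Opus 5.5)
We split the sum over $n$ into ranges according to the size of $|\log(x/n)|$. The ranges are $n\le x/a$, $x/a<n<x$ (near $x$, below), $x<n\le ax$ (near $x$, above), and $n>ax$.

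\textbf{Far ranges.} For $n\le x/a$ and for $n\ge ax$ we have $|\log(x/n)|\ge\log a$. So these two pieces are at most
\[
\frac{1}{\log a}\sum_{n\notin(x/a,ax)}\frac{d_3(n)}{n^c}\le\frac{1}{\log a}\Big(\sum_{n\ge1}\frac{d_3(n)}{n^c}\Big).
\]
More precisely, we bound the tail $\sum_{n>ax}d_3(n)n^{-c}$ and the head separately by partial summation, which produces the structure of $u_1$ and $u_2$. We write $T_3(t)=tP_3(\log t)+\Delta_3(t)$ with $P_3(y)=y^2/2+\alpha y+\beta$, where $\alpha=3\gamma-1$ and $\beta=3\gamma^2-3\gamma-3\gamma_1+1$. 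Abel summation then gives
\[
\sum_{n>ax}\frac{d_3(n)}{n^c}=-\frac{T_3(ax)}{(ax)^c}+c\int_{ax}^\infty\frac{T_3(t)}{t^{c+1}}\,dt.
\]
The main term integrates exactly to the polynomial-in-$\log(ax)$ expression over powers of $(c-1)$ appearing in $u_1$. The error term is bounded by \eqref{eq:Tudzi}, $|\Delta_3(t)|<2.968\,t^{2/3}\log^{1/3}t$. We integrate $t^{2/3-c-1}\log^{1/3}t$ by parts once, using $\log^{-2/3}t\le \log^{-5/3}(ax)\cdot\log t$ crudely, to get the last summand of $u_1$. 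The boundary term $T_3(ax)/(ax)^c$ at the point $ax$ is handled through $d_3(n)\le n^{1.5914\log3/\log\log n}$, the explicit Nicolas–Robin type bound valid since $ax\ge e^e$. The head $\sum_{n\le x/a}$ (and the constant pieces) are bounded analogously starting from $n=2$, with the $n=1$ term giving the $+1$. This uses the same expansion at $t=2$, which yields the $2^{-c}$ terms in $u_2$. The constraints $c<18.590$ and $\varepsilon<17.590$ only make sure these elementary inequalities hold uniformly.

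\textbf{Near ranges.} For $x/a<n<ax$, $n\neq x$, we use $d_3(n)\le C n^{\varepsilon}$. The exponent bookkeeping $(\cdot)^{\varepsilon-c}$ in the statement suggests we use simply $d_3(n)n^{-c}\le n^{\varepsilon-c}\le (\text{lower endpoint})^{\varepsilon-c}$, with the constant absorbed since $d_3(n)\le n^\varepsilon$ in the relevant range; this is where I would need to check the admissible $\varepsilon$. We then bound $1/|\log(x/n)|$ elementarily:
\begin{itemize}
\item for $n<x$, $\log(x/n)\ge (x-n)/x$;
\item for $n>x$, $\log(n/x)\ge (n-x)/n\ge (n-x)/(ax)$.
\end{itemize}
This reduces matters to harmonic sums $\sum 1/(x-n)$ over $n\in(x/a,x)$, and $\sum 1/(n-x)$ over $n\in(x,ax)$. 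For half-integer $x$ the distances are $1/2,3/2,\dots$. The harmonic sums are bounded by $\log N+\gamma+1/(2N)$, which yields $u_3,u_4$ with prefactors $x\cdot 2$ (from the half-integer spacing) and $(a+1)x$.

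The terms $n=x\pm1/2$, where $|\log(x/n)|$ is smallest ($\approx 1/(2x)$), are pulled out separately. Together they contribute $(4x-1)(x-1/2)^{\varepsilon-c}$. For integer $x$ the distances are $1,2,\dots$ and the excluded $n=x$ disappears, giving $u_5,u_6$. The $\pm\frac12$ shifts there come from the Euler–Maclaurin form $\sum_{k\le N}1/k\le\log N+\gamma+1/(2N)$ and the endpoint adjustments. The hypotheses $x>3a$ and $x(1-1/a)\ge3/2$ guarantee that the ranges are nonempty and that the harmonic-sum lengths are at least $1$.

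\textbf{Main obstacle.} The main obstacle is purely bookkeeping: matching exactly the explicit constants in $u_1,u_2$. In particular, getting the partial-summation error integral from \eqref{eq:Tudzi} into the stated form needs the bound $\int_{y}^\infty t^{-s}\log^{1/3}t\,dt\le \frac{y^{1-s}\log^{1/3}y}{s-1}\big(1+\frac{1}{3(s-1)\log y}\big)$ with $s=c+1/3$. The near-range steps are routine.
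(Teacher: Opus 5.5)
Your decomposition matches the paper's proof. On the far ranges you use $|\log(x/n)|\ge\log a$, partial summation against \eqref{convo3}, and \eqref{fge} for the single term $n=ax$. On the near ranges you use $d_3(n)\le n^{\varepsilon}$ and harmonic sums $H_N=\sum_{k\le N}1/k$.

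In the half-integer case your crude inequalities are enough on the near range:
the sum over $x/a\le n\le x-\tfrac32$ is at most $x(x/a)^{\varepsilon-c}u_3$; the sum over $x+\tfrac32\le n\le ax$ is at most $ax(x+\tfrac12)^{\varepsilon-c}u_4$; and the terms $n=x\pm\tfrac12$ give $2x(x-\tfrac12)^{\varepsilon-c}+(2x+1)(x+\tfrac12)^{\varepsilon-c}\le(4x-1)(x-\tfrac12)^{\varepsilon-c}$, because $c-\varepsilon>1$.

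\textbf{The gap is the integer case.} You call it routine, but it does not follow from your inequalities. Let $S_3,S_4$ be the sums over $x/a\le n\le x-1$ and $x+1\le n\le ax$.

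With $\log(n/x)\ge(n-x)/(ax)$ and $n^{\varepsilon-c}$ frozen at $(x+1)^{\varepsilon-c}$, you get $S_4\le ax(x+1)^{\varepsilon-c}H_{\lfloor(a-1)x\rfloor}$. Its main term $ax\log x$ is a factor $a$ larger than the target $(x+\tfrac12)(x+1)^{\varepsilon-c}u_6\sim x\log x$.

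Similarly, $\log(x/n)\ge(x-n)/x$ gives $S_3\le x(x/a)^{\varepsilon-c}\bigl(\log(x-\tfrac xa)+\gamma+\tfrac{1}{2(x-x/a)}\bigr)$. This exceeds $(x-\tfrac12)(x/a)^{\varepsilon-c}u_5$, since $u_5$ carries the extra term $-\tfrac12+\tfrac1{2a}<0$.

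No endpoint adjustment in $H_N\le\log N+\gamma+1/(2N)$ recovers losses of order $(a-1)x\log x$ and of order $x$. Even the sharper $1/(-\log(1-y))\le 1/y-\tfrac12$ still overshoots $(x-\tfrac12)u_5$ by roughly $\tfrac12\log x$ when $x/a$ is an integer. So with $n^{\varepsilon-c}$ frozen, you would need second-order information on $1/\log(x/n)$.

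\textbf{A simple repair} is to use the decay of $n^{\varepsilon-c}$, which is available because $c-\varepsilon>1$.
For $S_4$: $n^{\varepsilon-c}\le(x+1)^{\varepsilon-c}(x+1)/n$ together with $\log(n/x)\ge(n-x)/n$ gives $S_4\le(x+1)^{\varepsilon-c}(x+1)H_{\lfloor(a-1)x\rfloor}$. This does fit under $(x+\tfrac12)(x+1)^{\varepsilon-c}u_6$.
For $S_3$: $n^{\varepsilon-c}\le(x/a)^{\varepsilon-c}\,x/(an)$ with $x/(n(x-n))=1/n+1/(x-n)$ bounds $S_3$ by about $(x/a)^{\varepsilon-c}\tfrac xa(\log x+\log a)$. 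This is below the target except for small $x$.
The paper does not derive $u_5,u_6$ itself; it imports them from Lemma 2.5 of the earlier Paloj\"arvi--Tudzi paper.

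\textbf{Two smaller points.}
First, your worry about $d_3(n)\le n^{\varepsilon}$ is justified. The stated hypotheses do not give it. The paper obtains it from \eqref{fge} only for $n$ large in terms of $\varepsilon$, so it implicitly needs $x/a\ge x_0$. That condition is supplied only later, by $x_1\ge ax_0$ in Theorem~\ref{thm:EffectiveFourth}.

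Second, on the far range your (correct) integration by parts produces the secondary term $\tfrac{3c}{(3c-2)\log(ax)}$. This is larger than the $\tfrac{9c}{(3c-2)\log^{5/3}(ax)}$ in $u_1$ once $ax>e^{3\sqrt3}$. Your ``crude'' step $\log^{-2/3}t\le\log t/\log^{5/3}(ax)$ only makes it worse. So you need slack from somewhere. The natural source is the negative boundary term $-T_3(ax)(ax)^{-c}\le-\lfloor ax\rfloor(ax)^{-c}$ from your Abel summation, which you should keep rather than reassign to \eqref{fge}.
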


\begin{proof}
If $n < x/a$ or $n \geq ax$, then $|\log(x/n)| \ge \log a$, so
\begin{equation}
    \sum_{n < x/a} \frac{d_{3}(n)}{n^c\left|\log\frac{x}{n}\right|}
    + \sum_{n \geq ax} \frac{d_{3}(n)}{n^c\left|\log\frac{x}{n}\right|}
    \leq \frac{1}{\log a}\left(\sum_{n < x/a} \frac{d_{3}(n)}{n^c}
    + \sum_{n \geq ax} \frac{d_{3}(n)}{n^c}\right).
    \label{eq:nLargeSmall}
\end{equation}
From \cite[Theorem~1.1]{MR5085189}, we have
\begin{align}\label{convo3}
    T_{3}(x)&\leq x\!\left(\frac{\log^{2}x}{2} + \alpha\log x + \beta\right)
    + 2.968x^{\frac{2}{3}}\log^{\frac{1}{3}}x\nonumber\\
    &=: xP_3(\log x) + 2.968\,x^{\frac{2}{3}}\log^{\frac{1}{3}}x
\end{align}
for all $x \geq 2$, where $\alpha = 3\gamma - 1$ and
$\beta = 3\gamma^{2} - 3\gamma_{1} - 3\gamma + 1$.
Applying partial summation to the first sum on the right-hand side of \eqref{eq:nLargeSmall} gives
\begin{align}\label{Asss}
    \sum_{n < x/a} \frac{d_{3}(n)}{n^c}
    &= 1+\sum_{2\le n < x/a} \frac{d_{3}(n)}{n^c}\notag\\
    &\le 1+ \frac{T_{3}(x/a)}{(x/a)^c}-\frac{T_{3}(2)}{2^{c}}+c\int_{2}^{x/a}\frac{T_{3}(t)}{t^{c+1}}\,\mathrm{d}t.
\end{align}
 We define the second and the fourth terms in \eqref{Asss} as $A_{1}$ and $A_{2}$ respectively. 

Now, using \eqref{convo3}, we have 
\begin{equation}\label{dgbd}
   A_{1}\le  \frac{\,P_{3}(\log(\frac{x}{a}))}{(x/a)^{c-1}}+\frac{2.968\log^{1/3}(x/a)}{(x/a)^{c-2/3}}
\end{equation}
and 
\begin{align}\label{goo}
    A_2\le c\left(\int_{2}^{x/a}\frac{\frac{\log^{2}t}{2}+\alpha\log t+\beta}{t^c}\, \mathrm{d}t+2.968\int_{2}^{x/a}\frac{\log^{1/3}t}{t^{c+1/3}}\,\mathrm{d}t\right).
\end{align}
The first term in \eqref{goo} gives 
\begin{align}\label{googong}
 &=-\frac{(\frac{x}{a})^{1-c}c}{c-1}\left(\frac{\log^{2}(\frac{x}{a})}{2}+\frac{(\alpha(c-1)+1)\log(\frac{x}{a})}{(c-1)}+\frac{(\beta(c-1)^{2}+\alpha(c-1)+1)}{(c-1)^{2}}\right)+\notag\\&\,\quad\,\frac{2^{1-c}c}{c-1}\left(\frac{\log^{2} 2}{2}+\frac{(\alpha(c-1)+1)\log2}{(c-1)}+\frac{(\beta(c-1)^{2}+\alpha(c-1)+1)}{(c-1)^{2}}\right).   
\end{align}
Since $\alpha$ and $\beta$ are positive, $c>1$ and $x/a>3$, it follows that the sum of the first terms in \eqref{dgbd} and \eqref{googong}, together with the main term of the third term in \eqref{Asss}, is bounded above by
\begin{align*}
\le&\frac{(c+1)\left(\frac{\log^{2}2}{2}+\alpha\log 2+\beta\right)}{2^{c}(c-1)}+\frac{2c((c-1)(\log 2+\alpha)+1)}{2^{c}(c-1)^{3}}.
\end{align*}
Applying integration by parts to the second term in \eqref{goo} gives
\begin{equation}\label{kaka}
    = \frac{2.968c}{c-2/3}\left(\frac{\log^{1/3}2}{2^{c-2/3}}-\frac{\log^{1/3}(\frac{x}{a})}{(\frac{x}{a})^{c-2/3}}\right)+\frac{2.968c}{3c-2}\int_{2}^{x/a}\frac{1}{t^{c+1/3}\log^{2/3}t}\,\mathrm{d}t.
\end{equation}
Notice that, since $c>1$, the sum of the second terms in \eqref{dgbd} and \eqref{kaka} is non-positive and $\log^{2/3}(x/a)>\log^{2/3}3$ for all $x>3a$. Hence, we bound the sum of the second terms in \eqref{dgbd} and \eqref{goo}, together with the error term of the third term in \eqref{Asss} as follows:
\begin{align*}
\le \frac{2.968\log^{\frac{1}{3}}2}{2^{c-2/3}(3c-2)}\left(\frac{3c}{(3c-2)\log^{\frac{5}{3}}2}+6c-2\right).
\end{align*}

Therefore, we have 
\begin{align}\label{ghhd}
      \sum_{ n < x/a} \frac{d_{3}(n)}{n^c}&\leq\frac{2^{1-c}c}{c-1}\left(\frac{\log^{2}2}{2}+\frac{(\alpha(c-1)+1)\log2}{(c-1)}+\frac{(\beta(c-1)^{2}+\alpha(c-1)+1)}{(c-1)^{2}}\right)\notag\\ &\quad+ \frac{2.968\log^{\frac{1}{3}}2}{2^{c-2/3}(3c-2)}\left(\frac{3c}{(3c-2)\log^{\frac{5}{3}}2}+6c-2\right)+1.
\end{align}

For the second sum on the right-hand side of \eqref{eq:nLargeSmall}, we write 
\begin{equation}\label{eoco}
    \sum_{n \geq ax} \frac{d_{3}(n)}{n^c}=\frac{d_{3}(ax)}{(ax)^c}+\sum_{n >ax} \frac{d_{3}(n)}{n^c}
\end{equation}
We apply the bound 
\begin{equation}\label{fge}
    d_{3}(n)\le n^{\frac{1.5914\log 3}{\log\log n}}
\end{equation}
from \cite[Section 1]{teo2025} to estimate the first term on the right-hand side of \eqref{eoco} and use partial summation together with \eqref{convo3} to obtain a bound for the last term. This gives
\begin{align}\label{fghe}
    \sum_{n \geq ax}& \frac{d_{3}(n)}{n^c}\leq\frac{ (ax)^{\frac{1.5914\log 3}{\log\log (ax)}}}{(ax)^{c}}+\frac{c(ax)^{1-c}}{c-1}\Bigl(\frac{\log^{2}(ax)}{2}+\frac{(\alpha(c-1)+1)\log(ax)}{(c-1)}+\notag\\&\frac{(\beta(c-1)^{2}+\alpha(c-1)+1)}{(c-1)^{2}}\Bigr)+ \frac{2.968\log^{\frac{1}{3}}(ax)}{(ax)^{c-2/3}(3c-2)}\left(\frac{9c}{(3c-2)\log^{\frac{5}{3}}(ax)}+3c\right)
\end{align}
for $c \in (1+\varepsilon, 18.589)$. Note that one could obtain a sharper bound for \eqref{fghe} by directly applying partial summation and the bound in \eqref{convo3}. However, the resulting improvement is negligible, so the pointwise bound suffices. Now, substituting \eqref{ghhd} and \eqref{fghe} into \eqref{eq:nLargeSmall} yields
\begin{equation*}
    \sum_{n < x/a} \frac{d_3(n)}{n^c\left|\log\frac{x}{n}\right|}
    + \sum_{n \geq ax} \frac{d_3(n)}{n^c\left|\log\frac{x}{n}\right|}
    \leq u_{2}(x,\varepsilon,c,a).
\end{equation*}

Next, we consider the range $x/a \leq n \leq ax$. For the case where $x$ is a half-integer, we  write
\begin{align*}
    \sum_{x/a \leq n \leq ax} \frac{d_{3}(n)}{n^{c}\left|\log\frac{x}{n}\right|}
    &= \sum_{x/a \leq n \leq x-3/2}
    \frac{d_{3}(n)}{n^{c}\log(x/n)}
    + \sum_{x+1/2 \leq n \leq ax}
    \frac{d_{3}(n)}{n^{c}\log(n/x)}\\
    &:= S_{1} + S_{2}.
\end{align*}
To estimate $S_1$ and $S_{2}$, we adapt the same argument as in the corresponding part of the proof in \cite[Lemma 2.5]{palojarvi2026explicit} to the case $d_{3}(n)$ as in \eqref{fge}. Hence, we have 
\begin{equation*}
    S_1\le\left(2x-\frac{1}{2}\right)\left(\frac{x}{a}\right)^{\varepsilon-c} u_{3}(x,a)
\end{equation*}
and 
\begin{equation*}
    S_{2}\le  (a+1)x\left(x+\frac{1}{2}\right)^{\varepsilon-c}u_{4}(x,a),
\end{equation*}
where $u_{3}$ and $u_{4}$ are as defined in \eqref{m2rr} and \eqref{m3rr} respectively. 

Again, similar to the proof in \cite[Lemma 2.5]{palojarvi2026explicit}, if $n=x-1/2$, we have 
\begin{equation}\label{dkboundhalf}
    \frac{d_{3}(n)}{n^{c}\left|\log\frac{x}{n}\right|}\le (4x-1)(x-1/2)^{\epsilon-c}.
\end{equation}
Hence, the bound for half-integer $x$ now follows by combining the estimates for $u_1$, $u_2$, $u_3$, and \eqref{dkboundhalf}.

We now consider the case where $x$ is a positive integer. Since $n \neq x$, the middle range splits into $x/a \leq n \leq x-1$ and $x+1 \leq n \leq ax$, giving sums $S_3$ and $S_4$ respectively. For $S_3$ and $S_{4}$, we adapt the same argument as in the corresponding part of the proof in \cite[Lemma 2.5]{palojarvi2026explicit}. Hence, we have the following:
\begin{equation*}
   S_3\le\left(x-\frac{1}{2}\right)\left(\frac{x}{a}\right)^{\varepsilon-c}u_{5}(x,a)
\end{equation*}
and
\begin{equation*}
S_{4}\le (x+1)^{\varepsilon-c}\left(x+\frac{1}{2}\right)u_{6}(x,a),
\end{equation*}
where $u_{5}$ and $u_{6}$ are as defined in \eqref{m4rr} and \eqref{m5rr} respectively. The bound for integer $x$ follows by combining the estimates for
$u_1$, $u_4$, and $u_5$.
\end{proof}
\subsection{Auxiliary bounds for $\zeta(s)$ in the critical strip}
The representation obtained in Lemma 2.1 expresses $T_
3(x)$ in terms of an integral. Our next step is to identify the components contributing to this integral and establish suitable bounds for each of them.
 \begin{lemma}(\cite[Lemma 3.1]{palojarvi2026explicit}, $k=3$)
\label{lemma:IntegralsToBeEstimated}
Let $c>1$, $b<1$ and $T>0$. If $b>0$, then we have
\begin{multline}
\label{eq:zetakRectangle}
\frac{1}{2\pi i}\int_{c-iT}^{c+iT} \zeta^3(w)\frac{x^w}{w} \, \mathrm{d}w \\
=xP_3(\log{x})-\frac{1}{2\pi i}\left(\int_{c+iT}^{b+iT}+\int_{b+iT}^{b-iT}+\int_{b-iT}^{c-iT}\right) \zeta^3(w)\frac{x^w}{w} \, \mathrm{d}w.
\end{multline}
\end{lemma}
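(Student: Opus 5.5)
This is the residue theorem applied to $f(w)=\zeta^3(w)x^w/w$ on the rectangle with vertices $c-iT$, $c+iT$, $b+iT$, $b-iT$, traversed counterclockwise. Since $0<b<1<c$, the only singularity of $f$ inside the rectangle is the pole at $w=1$: $\zeta$ is holomorphic away from $w=1$, and the pole of $1/w$ at $w=0$ lies strictly to the left of the line $\operatorname{Re} w=b$.

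Cauchy's residue theorem gives
\[
\frac{1}{2\pi i}\left(\int_{c-iT}^{c+iT}+\int_{c+iT}^{b+iT}+\int_{b+iT}^{b-iT}+\int_{b-iT}^{c-iT}\right)f(w)\,\mathrm{d}w=\operatorname{Res}_{w=1}f(w).
\]
Rearranging this yields \eqref{eq:zetakRectangle}, provided the residue equals $xP_3(\log x)$.

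The only substantive step is the residue computation. Near $w=1$ we use the Laurent expansion
\[
\zeta(w)=\frac{1}{w-1}+\gamma-\gamma_1(w-1)+O((w-1)^2).
\]
Cubing it gives
\[
\zeta^3(w)=\frac{1}{(w-1)^3}+\frac{3\gamma}{(w-1)^2}+\frac{3\gamma^2-3\gamma_1}{w-1}+O(1).
\]
Next, expand
\[
\frac{x^w}{w}=x\,e^{(w-1)\log x}\,\bigl(1+(w-1)\bigr)^{-1}=x\sum_{j\ge0}A_j(w-1)^j,
\]
where
\[
A_0=1,\qquad A_1=\log x-1,\qquad A_2=\frac{\log^2x}{2}-\log x+1.
\]
The residue is the coefficient of $(w-1)^{-1}$ in the product, namely
\[
x\bigl(A_2+3\gamma A_1+3\gamma^2-3\gamma_1\bigr)=x\left(\frac{\log^2 x}{2}+(3\gamma-1)\log x+3\gamma^2-3\gamma-3\gamma_1+1\right).
\]
This is exactly $xP_3(\log x)$ as defined in \eqref{eq:asymptotic}.

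The argument has no real obstacle. The points to check are the sign conventions for $\gamma_1$ in the Laurent expansion, matching the $-3\gamma_1$ in \eqref{eq:asymptotic}, and that $b>0$ keeps $w=0$ outside the contour. The hypothesis $T>0$ only guarantees a nondegenerate rectangle. This is precisely \cite[Lemma 3.1]{palojarvi2026explicit} specialised to $k=3$.
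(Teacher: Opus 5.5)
Your proof is correct and takes essentially the same approach as the paper. The paper gives no argument of its own and simply cites \cite[Lemma 3.1]{palojarvi2026explicit} with $k=3$; that lemma rests on the same residue-theorem argument on the positively oriented rectangle, with $w=0$ excluded because $b>0$, and your Laurent-series computation correctly recovers the residue $xP_3(\log x)$ of \eqref{eq:asymptotic}.
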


The following lemmas provide explicit convexity bound for $\zeta(\sigma+it)$ in  the regions $1/2\le \sigma\le 5/7$ and $5/7\le \sigma\le c$.
\begin{lemma}
\label{lemma:zeta_half}
Assume $s := \sigma + it$, $t \geq t_0 \geq 3$, and $\sigma \in [1/2, 5/7]$. Let $\lambda :=(10 - 14\sigma)/3$. Then
\begin{equation*}
    |\zeta(s)| \leq \nu_{1}(\sigma, t_0)\, t^{\frac{\lambda}{6} + \frac{1-\lambda}{14}} \log t,
\end{equation*}
where
\begin{equation*}
    \nu_{1}(\sigma, t_0) := 0.611^{\lambda} \cdot 1.546^{1-\lambda} \cdot \left(1+\frac{\log(1 + 1.477t_{0}^{-2})}{2\log t_{0}}\right)\cdot\left(1 + \frac{1.477}{t_0^2}\right)^{\frac{1}{2}\left(\frac{\lambda}{6} + \frac{1-\lambda}{14}\right)}.
\end{equation*}
\end{lemma}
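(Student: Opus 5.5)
This lemma is an explicit convexity (Phragmén--Lindelöf) interpolation between two known explicit bounds. On $\sigma=1/2$ I will use the explicit Weyl-type bound $|\zeta(1/2+it)|\le 0.611\, t^{1/6}\log t$ for $t\ge 3$ (Hiary/Patel--Yang type). On $\sigma=5/7$ I will use an explicit bound of the form $|\zeta(5/7+it)|\le 1.546\, t^{1/14}\log t$ (from the Yang / Bellotti-type explicit results used in \cite{palojarvi2026explicit}).

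With $\lambda=(10-14\sigma)/3$ one checks that $\lambda=1$ at $\sigma=1/2$ and $\lambda=0$ at $\sigma=5/7$. Moreover $\sigma=\lambda\cdot\tfrac12+(1-\lambda)\cdot\tfrac57$, so $\lambda$ is exactly the linear interpolation parameter. The exponent $\lambda/6+(1-\lambda)/14$ is then the linear interpolation of $1/6$ and $1/14$.

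I will apply the explicit Phragmén--Lindelöf principle in the form used by Trudgian (the version with $|Q+s|$ factors): if $|f(a+it)|\le A|Q+a+it|^{\alpha_1}\log|Q+a+it|$ and $|f(b+it)|\le B|Q+b+it|^{\alpha_2}\log|Q+b+it|$, then in between the same form holds with $A^{\lambda}B^{1-\lambda}$ and the interpolated exponent. I would take $Q=0$ and work with $|s|$ in place of $t$. The first step is to rewrite the two endpoint bounds in terms of $|s|$, which only helps since $t\le|s|$. Applying Phragmén--Lindelöf in the strip $1/2\le\sigma\le5/7$, $t\ge t_0$ (after handling the growth condition, which holds since $\zeta$ has finite order there) gives
\[|\zeta(s)|\le 0.611^{\lambda}\,1.546^{1-\lambda}\,|s|^{\lambda/6+(1-\lambda)/14}\log|s|.\]

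The final step converts back from $|s|$ to $t$. Since $\sigma\le 5/7<1.216$, we have $|s|^2=\sigma^2+t^2\le t^2(1+1.477/t^2)\le t^2(1+1.477t_0^{-2})$, because $(5/7)^2\approx0.51<1.477$. I suspect $1.477$ arises from a margin, e.g.\ $\sigma\le c$ for some $c$ with $c^2=1.477$, or from the constraint in the cited lemma. This gives
\[|s|^{\theta}\le t^{\theta}(1+1.477t_0^{-2})^{\theta/2}\]
and
\[\log|s|\le \log t+\tfrac12\log(1+1.477t_0^{-2})\le\log t\Bigl(1+\frac{\log(1+1.477t_0^{-2})}{2\log t_0}\Bigr),\]
using $t\ge t_0$. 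Combining these yields exactly $\nu_1(\sigma,t_0)$.

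The main obstacle is justifying the Phragmén--Lindelöf step with a logarithmic factor. The function $\log s$ is not of modulus $\log|s|$, so one must use a version allowing $\log|Q+s|$ factors, as in Trudgian's explicit convexity lemma. I would cite that lemma directly, checking that its hypotheses ($Q\ge0$ real, $Q+a>0$ or an appropriate substitute, and $f$ of finite order in the strip) are met. If necessary, I would shift by a small $Q$ and absorb the change into the same $1.477t_0^{-2}$ correction.
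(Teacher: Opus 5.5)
Your route is the paper's: a Trudgian-type explicit Phragm\'en--Lindel\"of lemma (the paper cites Leong's Lemma~3) interpolates $0.611\,t^{1/6}\log t$ on $\sigma=1/2$ and $1.546\,t^{1/14}\log t$ on $\sigma=5/7$. It is followed by exactly your conversion from $|Q+s|$ back to $t$.

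The step that fails as written is $Q=0$. The lemma requires $Q+a>1$, i.e.\ $Q>1/2$ here. With $Q=0$ its boundary hypothesis is impossible, since $\log|s|<0$ at $s=1/2$. Your fallback of shifting $Q$ is what is needed, and it explains $1.477$: the paper takes $Q_0=0.501$, and $(Q_0+5/7)^2\approx 1.4769$. So $|Q_0+s|^2\le t^2(1+1.477t_0^{-2})$, and your last step then yields $\nu_1(\sigma,t_0)$ verbatim.

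Applying the lemma to $\zeta$ itself is legitimate, because $s=1$ lies outside $1/2\le\sigma\le 5/7$. It is actually cleaner than the paper's use of $(s-1)\zeta(s)$. The paper interpolates exponents $7/6$ and $15/14$ and then divides by $|s-1|\ge t$. Since $|Q_0+s|>|s-1|$ throughout this strip, that division leaves a factor $|Q_0+s|/t\le(1+1.477t_0^{-2})^{1/2}$ which the stated $\nu_1$ does not contain. Your route gives the stated constant exactly.

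What neither your sketch nor the paper handles is that the lemma is a full-strip statement: it needs both boundary bounds for all real $t$. Negative $t$ is fine by $\zeta(\bar s)=\overline{\zeta(s)}$. However, the input bounds are only available for $t\ge 3$, and in the $|Q_0+s|$ form they are false for small $|t|$. At $t=0$, $|\zeta(1/2)|\approx 1.46$, while $0.611\cdot 1.001^{1/6}\log 1.001<10^{-3}$.

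Saying you apply the principle ``in the strip $t\ge t_0$'' does not fix this. A half-strip version of Phragm\'en--Lindel\"of also needs control of $\zeta$ on the bottom segment, which you do not have. To close the gap you need one of two things:
\begin{enumerate}
\item a half-strip variant of the lemma, which additionally requires a bound on the segment $\{\sigma+3i:\ 1/2\le\sigma\le 5/7\}$; or
\item a larger $Q$, with the endpoint bounds verified numerically for $|t|<3$, which changes $1.477$ and hence $\nu_1$.
\end{enumerate}
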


\begin{proof}
We apply \cite[Lemma 3]{Leong2024} with $f(s):=(s-1)\zeta(s)$. We begin by establishing bounds for $f(s)$ on each boundary line. On the line $\Re(s)=1/2$, it follows from \cite[Theorem~1.1(b)]{R2026} that
\begin{equation*}
    |\zeta(1/2 + it)| \leq 0.611\, t^{1/6} \log t, \quad t \geq t_0 \geq 3.
\end{equation*}
Since $|s-1|\leq |Q_{0}+s|$, $t\leq |Q_{0}+s|$, and $\log t\leq \log|Q_{0}+s|$, where $Q_{0}+1/2>1$, we deduce that
\begin{equation*}
    |f(1/2 + it)| \leq 0.611\, |Q_0 + s|^{7/6}(\log|Q_0 + s|).
\end{equation*}
For $\Re(s) = 5/7$, by \cite[Theorem 1.1]{MR4693232} with $k = 4$ we have 
\begin{equation*}
    |\zeta(5/7 + it)| \leq 1.546\, t^{1/14} \log t, \quad t \geq t_0 \geq 3.
\end{equation*}
 Hence, we have
\begin{equation*}
    |f(5/7 + it)| \leq 1.546\, |Q_0 + s|^{15/14}(\log|Q_0 + s|).
\end{equation*}

Next, we verify the conditions of \cite[Lemma 3]{Leong2024} with $A = 0.611$, $B = 1.546$, 
$\alpha_1 = 7/6$, $\beta_1 = 15/14$, and $\alpha_2 = \beta_2 = 1$. 
Notice that the inequalities $\alpha_1 \geq \beta_1$ is satisfied. Hence,
with $a = 1/2$, $b = 5/7$, and $b-a = 3/14$, we obtain
\begin{align*}
    |f(s)| &\leq \left(0.611\,|Q_0+s|^{7/6}\log|Q_0+s|\right)^{\lambda} 
              \cdot \left(1.546\,|Q_0+s|^{15/14}\log|Q_0+s|\right)^{1-\lambda} \\
           &= 0.611^{\lambda} \cdot 1.546^{1-\lambda} 
              \cdot |Q_0+s|^{\frac{7\lambda}{6} + \frac{15(1-\lambda)}{14}} 
              \cdot \log|Q_0+s|.
\end{align*}
Recovering $\zeta(s) = f(s)/(s-1)$ and using $|s-1| \geq t$ for $t \geq t_0$, we obtain
\begin{equation*}
    |\zeta(s)| \leq 0.611^{\lambda} \cdot 1.546^{1-\lambda} 
    \cdot |Q_0+s|^{\frac{\lambda}{6} + \frac{1-\lambda}{14}} \cdot \log|Q_0+s|.
\end{equation*}

Finally, taking $Q_{0}=0.501$ we have
\begin{equation*}
    |Q_0+s| \leq \left(1 + \frac{(Q_{0}+5/7)^2}{t_0^2}\right)^{1/2}t\le \left(1 + \frac{1.477}{t_0^2}\right)^{1/2}t
\end{equation*}
and 
\begin{equation*}
    \log|Q_0+s| \leq  \left(1+\frac{\log(1 + 1.477t_{0}^{-2})}{2\log t_{0}}\right)\log t.
\end{equation*}
This completes the proof.
\end{proof}

\begin{lemma}
\label{lemma:zetaMiddle2}
Assume $c > 1$, $s := \sigma + it$, $t \geq t_0 \geq 3$ and $\sigma \in [5/7, c]$. 
Then
\begin{equation*}
    |\zeta(s)| \leq \nu_{2}(c, \sigma, t_0)\, t^{\frac{c-\sigma}{14(c-5/7)}} \log t,
\end{equation*}
where
\begin{multline*}
    \nu_{2}(c, \sigma, t_0) := 1.546^{\frac{c-\sigma}{c-5/7}} 
    \left(\frac{\zeta(c)}{\log t_0}\right)^{\frac{\sigma - 5/7}{c-5/7}}
    \left(1+\frac{\pi}{2\log{t_0}}+\frac{\pi(c+1.31)^2}{4t_0(\log{t_0})^2}+\frac{c+1.31}{2t_0^2\log{t_0}}\right) \\
    \cdot \left(\frac{c+1.31+t_0}{t_0}\right)^{\frac{c-\sigma}{14(c-5/7)}+1}. 
\end{multline*}
\end{lemma}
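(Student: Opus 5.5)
The plan is to argue exactly as in Lemma \ref{lemma:zeta_half}, applying \cite[Lemma 3]{Leong2024} (the Phragmén--Lindelöf type convexity lemma) to $f(s):=(s-1)\zeta(s)$ on the strip $5/7\le\sigma\le c$, with $Q_0:=1.31$. The factor $(c+1.31+t_0)/t_0$ in $\nu_2$ suggests this choice: it arises from $|Q_0+s|\le c+Q_0+t\le \frac{c+1.31+t_0}{t_0}\,t$ for $t\ge t_0$.

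\textbf{Left edge $\sigma=5/7$.} Here \cite[Theorem 1.1]{MR4693232} (with $k=4$) gives $|\zeta(5/7+it)|\le 1.546\,t^{1/14}\log t$. As before, $|s-1|\le|Q_0+s|$, $t\le|Q_0+s|$ and $\log t\le\log|Q_0+s|$, so
\[
|f(5/7+it)|\le 1.546\,|Q_0+s|^{15/14}\log|Q_0+s|.
\]

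\textbf{Right edge $\sigma=c>1$.} Here $|\zeta(c+it)|\le\zeta(c)$ trivially. To have the same shape as the left edge (a power times a logarithm, with $\alpha_2=\beta_2=1$), write $\zeta(c)=\frac{\zeta(c)}{\log t_0}\log t_0\le \frac{\zeta(c)}{\log t_0}\log|Q_0+s|$. This gives
\[
|f(c+it)|\le \frac{\zeta(c)}{\log t_0}\,|Q_0+s|\,\log|Q_0+s|.
\]
So we invoke the convexity lemma with $A=1.546$, $B=\zeta(c)/\log t_0$, $\alpha_1=15/14\ge\beta_1=1$, and $\lambda=(c-\sigma)/(c-5/7)$. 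The result is
\[
|f(s)|\le A^{\lambda}B^{1-\lambda}\,|Q_0+s|^{\lambda/14+1}\log|Q_0+s|,
\]
and since $1-\lambda=(\sigma-5/7)/(c-5/7)$, the exponents of $1.546$ and of $\zeta(c)/\log t_0$ match those in $\nu_2$.

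\textbf{Recovering $\zeta$.} Divide by $|s-1|\ge t$ and bound $|Q_0+s|^{\lambda/14+1}\le\bigl(\frac{c+1.31+t_0}{t_0}\bigr)^{\lambda/14+1}t^{\lambda/14+1}$. The extra power of $t$ is exactly cancelled by $1/|s-1|\le 1/t$, leaving $t^{\lambda/14}=t^{\frac{c-\sigma}{14(c-5/7)}}$ as claimed.

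\textbf{Main obstacle.} The remaining factor $\log|Q_0+s|$ must be bounded by $\log t$ times the bracket
\[
1+\frac{\pi}{2\log t_0}+\frac{\pi(c+1.31)^2}{4t_0(\log t_0)^2}+\frac{c+1.31}{2t_0^2\log t_0}.
\]
The $\pi/2$ and $\pi/4$ terms indicate that the convexity lemma of \cite{Leong2024} works with $\log$ of a complex quantity, i.e.\ $|\log(Q_0+s)|\le\log|Q_0+s|+|\arg(Q_0+s)|\le\log|Q_0+s|+\pi/2$. Indeed, that lemma needs an analytic majorant such as $(Q_0+s)^{\alpha}\log(Q_0+s)$. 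I would therefore track the constant through its proof. I would use
\[
\log|Q_0+s|\le\log t+\tfrac12\log\Bigl(1+\tfrac{(c+Q_0)^2}{t^2}\Bigr)\le\log t+\frac{(c+Q_0)^2}{2t^2},
\]
and an expansion of $\arg$ to get the correction terms. I would then divide by $\log t\ge\log t_0$ to produce the bracket term by term. Checking the admissibility conditions of that lemma with $Q_0=1.31$ (e.g.\ $Q_0+5/7>1$, so that $\log|Q_0+s|>0$ throughout the strip) is routine; reproducing exactly the stated constant is the delicate bookkeeping step.
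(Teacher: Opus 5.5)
Your plan (convexity for $f(s)=(s-1)\zeta(s)$ between the line $\sigma=5/7$, using the $1.546\,t^{1/14}\log t$ bound of \cite[Theorem 1.1]{MR4693232}, and the line $\sigma=c$, with $Q_0=1.31$) is the natural one. However, the step you call routine is exactly where it breaks.

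The convexity lemma \cite[Lemma 3]{Leong2024} requires the edge inequalities on the \emph{entire} lines $\Re s=5/7$ and $\Re s=c$, i.e.\ for all real $t$. Your right-edge bound
\[
|f(c+it)|\le \frac{\zeta(c)}{\log t_0}\,|Q_0+s|\log|Q_0+s|
\]
uses $\log t_0\le\log|Q_0+c+it|$, which is false whenever $(c+1.31)^2+t^2<t_0^2$. The bound itself genuinely fails there. For the values used later in the paper ($c=1.38$, $t_0=10^7$), at $t=0$ the left side is $(c-1)\zeta(c)\approx 1.23$, while the right side is about $0.53$. So the hypothesis of the lemma is not satisfied and its conclusion cannot be invoked.

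Similarly, the bound on $\sigma=5/7$ is only available for $t\ge 3$. You need conjugate symmetry for $t\le -3$ and a direct finite check for $|t|<3$. That check has ample room, since $|Q_0+5/7+it|\ge 2.02$.

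The repair is to insert $\log t_0$ only after the convexity step. Apply the lemma with the right-edge bound $|f(c+it)|\le\zeta(c)|Q_0+s|$, which is valid for all $t$; that is, take $B=\zeta(c)$, $\beta_1=1$, $\beta_2=0$. The conditions still hold: $\alpha_1=15/14\ge\beta_1$ and $\alpha_2=1\ge\beta_2$. This gives
\[
|f(s)|\le 1.546^{\lambda}\zeta(c)^{1-\lambda}|Q_0+s|^{1+\lambda/14}(\log|Q_0+s|)^{\lambda},\qquad \lambda=\frac{c-\sigma}{c-5/7},
\]
on the whole strip. Only now, for $t\ge t_0$, use $|Q_0+s|\ge t_0$ to write $(\log|Q_0+s|)^{\lambda}\le(\log t_0)^{\lambda-1}\log|Q_0+s|$. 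This produces the factor $(\zeta(c)/\log t_0)^{(\sigma-5/7)/(c-5/7)}$, and the rest of your argument (dividing by $|s-1|\ge t$ and bounding $|Q_0+s|$ by $\frac{c+1.31+t_0}{t_0}\,t$) goes through.

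Your concern about reproducing the bracket is unfounded. The real estimate $\log|Q_0+s|\le\log t+\frac{(c+1.31)^2}{2t^2}$ gives the factor $1+\frac{(c+1.31)^2}{2t_0^2\log t_0}$. This is already at most the stated bracket, because $2\log t_0\le\pi t_0$ implies $\frac{(c+1.31)^2}{2t_0^2\log t_0}\le\frac{\pi(c+1.31)^2}{4t_0(\log t_0)^2}$. Since a smaller constant implies the lemma, no complex logarithm or exact bookkeeping through the proof of \cite[Lemma 3]{Leong2024} is needed.
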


\begin{proof}
The proof follows analogously to that of \cite[Lemma~3.2]{palojarvi2026explicit} and \cite[Second example]{MR5016717}, except that the left boundary $\sigma=\tfrac12$ is replaced by $\sigma=\tfrac57$, and \cite[Theorem~1.1]{MR4693232} with $k=3$ is used in place of \cite[Theorem~1.1(a)]{R2026}.
\end{proof}

In the next lemma, we derive estimates for the first and last integrals on the right-hand side of \eqref{eq:zetakRectangle}.

\begin{lemma}\label{Lemu7}
Assume $T_1 \geq T \geq T_0 \geq 3$ and $c > 1$. Then
\begin{equation*}
    \left|\frac{1}{2\pi i}\left(\int_{c+iT}^{1/2+iT} + \int_{1/2-iT}^{c-iT}\right)\zeta^3(w)\frac{x^w}{w}\,\mathrm{d}w\right| \leq u_{7}\!\left(x,\tfrac{1}{2},c,T,T_0,T_1\right),
\end{equation*}
where
\begin{multline*}
    u_{7}\!\left(x,\tfrac{1}{2},c,T,T_0,T_1\right) := 
    \frac{(\log T_1)^3}{T\pi}
    \!\left[
        \frac{3}{14}
        \max\left\{
            \nu_{1}\!\left(\tfrac{1}{2},T_0\right)^3 T^{\frac{1}{2}}x^{\frac{1}{2}},\; 
            \nu_{1}\!\left(\tfrac{5}{7},T_0\right)^3 
            T^{\frac{3}{14}}x^{\frac{5}{7}}
        \right\}\right.\\
        \left.+\left(c-\frac{5}{7}\right)
        \max\left\{
            \nu_{2}\!\left(c,\tfrac{5}{7},T_0\right)^3 
            T^{\frac{3}{14}}x^{\frac{5}{7}},\; 
            \nu_{2}(c,c,T_0)^3 x^c
        \right\}
    \right].
\end{multline*}
\end{lemma}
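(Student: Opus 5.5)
By symmetry, $\zeta(\bar w)=\overline{\zeta(w)}$ together with $x^{\bar w}/\bar w=\overline{x^w/w}$ shows that the two horizontal integrals are complex conjugates up to orientation. It therefore suffices to bound
\[
\frac{1}{\pi}\int_{1/2}^{c}|\zeta(\sigma+iT)|^3\frac{x^\sigma}{|\sigma+iT|}\,\mathrm{d}\sigma ,
\]
and the factor $1/\pi$ comes from $2\cdot\frac{1}{2\pi}$. Since $|\sigma+iT|\ge T$, the denominator contributes $1/T$. We split the range of $\sigma$ at $5/7$, using Lemma~\ref{lemma:zeta_half} on $[1/2,5/7]$ and Lemma~\ref{lemma:zetaMiddle2} on $[5/7,c]$, each with $t=T\ge T_0$. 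In both lemmas we bound $\log T\le\log T_1$, which gives the factor $(\log T_1)^3$.

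On $[1/2,5/7]$ the integrand is at most $(\log T_1)^3\,\nu_1(\sigma,T_0)^3\,T^{3(\lambda/6+(1-\lambda)/14)}x^\sigma$ with $\lambda=(10-14\sigma)/3$, so the exponent of $T$ is affine in $\sigma$. We have $\log\nu_1(\sigma,T_0)$ affine in $\lambda$: the factor $0.611^\lambda1.546^{1-\lambda}$ is log-linear, the middle factor is constant, and the last factor has exponent linear in $\lambda$. Also $\lambda$ is affine in $\sigma$. Hence the whole integrand is $\exp$ of an affine function of $\sigma$, so it is log-convex, in fact log-linear, and its maximum over the interval is attained at an endpoint. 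At $\sigma=1/2$ we get $\lambda=1$ and the value $\nu_1(1/2,T_0)^3T^{1/2}x^{1/2}$. At $\sigma=5/7$ we get $\lambda=0$ and the value $\nu_1(5/7,T_0)^3T^{3/14}x^{5/7}$. Bounding the integral by the interval length $3/14$ times this maximum gives the first term.

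On $[5/7,c]$ we do the same. The exponent $\frac{c-\sigma}{14(c-5/7)}$ is affine. In $\nu_2$, the first two factors are of the form $A^{\text{affine}}$. The third factor is independent of $\sigma$, and the fourth is a constant raised to an affine power. So $\nu_2(c,\sigma,T_0)^3T^{3(c-\sigma)/(14(c-5/7))}x^\sigma$ is again log-linear in $\sigma$ and is maximised at an endpoint. At $\sigma=5/7$ this gives $\nu_2(c,5/7,T_0)^3T^{3/14}x^{5/7}$, and at $\sigma=c$ it gives $\nu_2(c,c,T_0)^3x^c$. Multiplying by the length $c-5/7$ yields the second term. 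Summing the two pieces gives $u_7$.

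The only delicate point is checking that the $\sigma$-dependence of $\nu_1$ and $\nu_2$ is genuinely log-affine, so that the endpoint-maximum argument is valid. This holds by inspection of the given formulas. If some factor were not log-affine, I would instead bound it separately by its maximum over the interval.
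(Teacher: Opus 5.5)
Your proof is correct and follows the paper's argument. You bound both horizontal segments by $\frac{1}{\pi}\int_{1/2}^{c}|\zeta(\sigma+iT)|^3x^\sigma T^{-1}\,\mathrm{d}\sigma$, split at $\sigma=5/7$, apply Lemmas \ref{lemma:zeta_half} and \ref{lemma:zetaMiddle2} with $\log T\le\log T_1$, and use that each pointwise bound is the exponential of an affine function of $\sigma$, so it is maximised at an endpoint. Your explicit check that $\nu_1$ and $\nu_2$ are log-affine in $\sigma$, and your conjugate-symmetry handling of the lower segment, simply spell out what the paper asserts more briefly.
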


\begin{proof}
We follow the proof in \cite[Lemma 3.6]{palojarvi2026explicit}. Now, we write
\begin{equation*}
    \left|\frac{1}{2\pi i}\left(\int_{1/2+iT}^{c+iT} + \int_{1/2-iT}^{c-iT}\right)\zeta^3(s)\frac{x^s}{s}\,\mathrm{d}s\right| 
    \leq \frac{1}{\pi}\int_{1/2}^{c}\frac{|\zeta(\sigma+iT)|^3 x^\sigma}{T}\,\mathrm{d}\sigma.
\end{equation*}
We split the 
integral on the right-hand side as follows:
\begin{equation}\label{priyam}
    \frac{1}{\pi}\int_{1/2}^{c}\frac{|\zeta(\sigma+iT)|^3 x^\sigma}{T}\,\mathrm{d}\sigma 
    = \frac{1}{\pi}\left(\int_{1/2}^{5/7}+\int_{5/7}^{c}\right)
    \frac{|\zeta(\sigma+iT)|^3 x^\sigma}{T}\,\mathrm{d}\sigma.
\end{equation}
Notice that for $\sigma\in[1/2,5/7]$, Lemma \ref{lemma:zeta_half} implies that the integrand on the right-hand side of \eqref{priyam} satisfies
\begin{equation*}
    \frac{\left|\zeta(\sigma+iT)\right|^3 x^\sigma }{T}  \leq \exp\left(\sigma\log{x}+3\log{\nu_{1}(\sigma,T_0)}+\left(\frac{4\lambda+3}{14}-1\right)\log{T}+3\log\log{T}\right).
\end{equation*}
Since $\lambda$ is linear in $\sigma$, it follows that $\log \nu_{1}(\sigma,T_0)$ is also linear in $\sigma$. Hence, the exponent on the right-hand side is a linear function of $\sigma$. Therefore, the right-hand side attains its maximum over the interval $[1/2,5/7]$ at one of the endpoints.
Hence, we estimate the first integral as
\begin{multline}
\label{eq:firstInt}
    \frac{1}{\pi}\int_{1/2}^{5/7}\frac{|\zeta(\sigma+iT)|^3 x^\sigma}{T}\,\mathrm{d}\sigma \\
    \leq \frac{3(\log T_1)^3}{14\pi T}
    \max\left\{
        \nu_{1}\!\left(\tfrac{1}{2},T_0\right)^3 T^{\frac{1}{2}}x^{\frac{1}{2}},\, 
        \nu_{1}\!\left(\tfrac{5}{7},T_0\right)^3 
        T^{\frac{3}{14}}x^{\frac{5}{7}}
    \right\}.
\end{multline}
Similar to the case where $\sigma\in[1/2,5/7]$, we apply Lemma~\ref{lemma:zetaMiddle2} to the second integral on the right-hand side of \eqref{priyam}. This gives 
\begin{multline}
\label{eq:secondInt}
    \frac{1}{\pi}\int_{5/7}^{c}\frac{|\zeta(\sigma+iT)|^3 x^\sigma}{T}\,\mathrm{d}\sigma \\
    \quad\leq \frac{(c-5/7)(\log T_1)^3}{\pi T}
    \max\left\{
        \nu_{2}\!\left(c,\tfrac{5}{7},T_0\right)^3 
        T^{\frac{3}{14}}x^{\frac{5}{7}},\; 
        \nu_{2}(c,c,T_0)^3 x^c
    \right\}
\end{multline}
since the integrand attains its maximum on the interval  $\sigma\in[5/7,c]$ at one of the endpoints.
Combining \eqref{eq:firstInt} and \eqref{eq:secondInt} completes the proof.
\end{proof}

\section{An integral estimate via moment and pointwise bounds }
\label{sec:momentEstimates}
In the lemmas that follow, we make use of existing explicit third moments and pointwise estimates of $\zeta(1/2+it)$ to derive estimates for certain integrals that arise in our analysis.
\begin{lemma}
\label{lemma:thirdmoment}
    Let $x>1$, $\sigma=1/2$, and $T \geq 10^7$. We have
    \begin{equation*}
|I(x,T)|:=\left|\frac{1}{2\pi i}\int_{\sigma - iT}^{\sigma + iT} \zeta^{3}(s) \frac{x^s}{s} \mathrm{d}s\right|\le x^{1/2}\Bigl(0.304\log^{\frac{7}{2}}T+1.063\log^{\frac{5}{2}}T+1.240\cdot 10^{6}\Bigr).
\end{equation*}
\end{lemma}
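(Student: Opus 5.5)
On the line $\sigma=1/2$ we have $|x^s|=x^{1/2}$ and $|s|=\sqrt{1/4+t^2}$, so the plan is to write
\[
|I(x,T)|\le \frac{x^{1/2}}{2\pi}\int_{-T}^{T}\frac{|\zeta(1/2+it)|^3}{|1/2+it|}\,\mathrm{d}t=\frac{x^{1/2}}{\pi}\int_{0}^{T}\frac{|\zeta(1/2+it)|^3}{\sqrt{1/4+t^2}}\,\mathrm{d}t ,
\]
using $|\zeta(1/2-it)|=|\zeta(1/2+it)|$. We then split the range at a fixed height $t_1$ (say $t_1=3$, or a larger value where an explicit third-moment bound becomes valid). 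On $[0,t_1]$ the integral is a computable constant, estimated numerically or via a crude pointwise bound for $|\zeta(1/2+it)|$. On $[t_1,T]$ we use $1/|s|\le 1/t$ and handle $\int_{t_1}^T |\zeta(1/2+it)|^3 t^{-1}\,\mathrm{d}t$ by partial summation against the third moment $M_3(U):=\int_0^U|\zeta(1/2+it)|^3\,\mathrm{d}t$.

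For the moment itself, I would take the third moment from Cauchy--Schwarz between explicit second and fourth moments:
\[
M_3(U)\le \Bigl(\int_0^U|\zeta|^2\Bigr)^{1/2}\Bigl(\int_0^U|\zeta|^4\Bigr)^{1/2},
\]
with the explicit second moment $U\log U+O(U)$ and an explicit fourth moment bound of shape $\frac{U}{2\pi^2}\log^4U+O(U\log^3U)$ from the literature. This gives
\[
M_3(U)\le U\bigl(A\log^{5/2}U+B\log^{3/2}U+C\bigr)
\]
for $U\ge U_0$, with explicit constants. Here $A\approx (2\pi^2)^{-1/2}\approx 0.225$, up to how lower-order terms are absorbed. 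The exponent $5/2$ is what eventually yields $\log^{7/2}T$.

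Partial summation then gives
\[
\int_{t_1}^T\frac{|\zeta|^3}{t}\,\mathrm{d}t=\frac{M_3(T)}{T}-\frac{M_3(t_1)}{t_1}+\int_{t_1}^T\frac{M_3(t)}{t^2}\,\mathrm{d}t .
\]
The last integral contributes $\frac{2A}{7}\log^{7/2}T+\frac{2B}{5}\log^{5/2}T+\dots$, while $M_3(T)/T$ contributes $A\log^{5/2}T$. After dividing by $\pi$, the coefficients combine to the stated $0.304\log^{7/2}T+1.063\log^{5/2}T$. Any lower-order pieces like $\log^{3/2}T$ and $C\log T$ are absorbed into the $\log^{5/2}T$ coefficient using $T\ge 10^7$. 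On the range $[t_1,U_0]$, where the moment bound is not yet valid, I would instead use the pointwise bound $|\zeta(1/2+it)|\le 0.611\,t^{1/6}\log t$ from \cite{R2026} (or a numerical bound for small $t$). This produces an integral of size roughly $U_0^{1/2}\log^3U_0$, and it is the source of the large additive constant $1.240\cdot10^6$.

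The main obstacle is bookkeeping the constants so that they match exactly. This means choosing $U_0$ (the threshold for the explicit moment results) so that the additive constant stays near $10^6$, and then rigorously absorbing every lower-order term into the displayed coefficients using only $T\ge10^7$. I would first compute the leading constant from the Cauchy--Schwarz bound and check that it yields $0.304$. If not, I would use a directly stated explicit third-moment bound, if one exists in the cited literature, in place of Cauchy--Schwarz.
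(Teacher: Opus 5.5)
Your architecture matches the paper's proof. It uses the same reduction to $\frac{x^{1/2}}{\pi}\int_0^T\frac{|\zeta(1/2+it)|^3}{\sqrt{1/4+t^2}}\,\mathrm{d}t$ and splits the range into a numerically computed initial piece, a middle piece handled by a pointwise $t^{1/6}\log t$ bound, and a tail handled by integration by parts against $\phi_{3/2}(t)=\int_0^t|\zeta(1/2+iu)|^3\,\mathrm{d}u$. The third moment comes from Cauchy--Schwarz applied to explicit second and fourth moments; this is exactly \cite[Corollary 3]{chourasiya2025explicitforminghamszero}, which the paper quotes.

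\textbf{The gap.} The lemma is an explicit numerical inequality, and with your inputs the constants do not close. The third-moment bound is only stated for $T\ge 10^7$, so the pointwise range must extend to $10^7$. With $0.611\,t^{1/6}\log t$, the interval $[200,10^7]$ alone contributes $\frac{0.611^3}{\pi}\int_{200}^{10^7}\frac{\log^3 t}{\sqrt t}\,\mathrm{d}t\approx 1.36\cdot 10^6$, which already exceeds $1.240\cdot10^6$. At $T=10^7$ the whole right-hand side is only about $1.246\cdot 10^6\,x^{1/2}$, so nothing can compensate. The paper instead uses the sharper bound $|\zeta(1/2+it)|\le 0.592\,t^{1/6}\log t$ for $200\le t<5.5\cdot10^7$ from \cite{HPY2022}. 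That gives $\approx 1.2393\cdot10^6$, and a numerical evaluation contributes $I_1\le 10.457$ on $[0,200]$.

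\textbf{The coefficients.} Your account of $0.304$ and $1.063$ is also off. With $A=(2\pi^2)^{-1/2}\approx 0.225$ you get $\frac{2A}{7\pi}\approx 0.0205$, not $0.304$. Moreover, the Cauchy--Schwarz output is not of the shape $U(A\log^{5/2}U+B\log^{3/2}U+C)$.

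The paper uses
$\phi_{3/2}(T)\le\bigl(\frac{1}{2\pi^2}+\frac{39.720}{\log T}+\frac{5.817\cdot10^{12}}{T\log^4T}\bigr)^{1/2}T\log^{5/2}T$
and bounds the decreasing root factor by its value $3.337$ at $T=10^7$. From this:
\begin{itemize}
\item $\frac{1}{\pi}\int_{10^7}^T\phi_{3/2}(t)t^{-2}\,\mathrm{d}t$ gives $\frac{2}{7}\cdot\frac{3.337}{\pi}\approx 0.3035$ times $\log^{7/2}T$, minus $\approx 5101.95$ from the lower limit.
\item The boundary term $\phi_{3/2}(T)/(\pi T)$ gives $\frac{3.337}{\pi}\approx 1.0622$ times $\log^{5/2}T$.
\item The term $-\phi_{3/2}(10^7)/10^7$ is simply dropped.
\end{itemize}
So nothing is absorbed into the $\log^{5/2}T$ coefficient.

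You could keep the $0.225$ main term instead. In that case the lower-order pieces must be tracked explicitly and fitted into the stated shape. Via subadditivity of the square root, these are roughly $6.3\,U\log^2U$ and $2.4\cdot10^6\sqrt{U}\log^{1/2}U$. The crude uniform bound is what reproduces exactly the stated coefficients.
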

\begin{proof}
    Let $s=1/2+it$, then $\text{d}s=i\,\text{d}t$. We have
    \begin{align}{\label{fbi}}
        |I(x,T)|& \le \frac{x^{1/2}}{2\pi}\int_{-T}^{T}  \frac{|\zeta^{3}(1/2+it)|}{\sqrt{1/4+t^2}}\, \text{d}t=\frac{2x^{1/2}}{\pi}\int_{ 0}^{T}  \frac{|\zeta^{3}(1/2+it)|}{\sqrt{4t^{2}+1}}\,  \text{d}t.
    \end{align}
Now, we split the integration range into three parts as follows: 
\begin{align}\label{aca}
\frac{2}{\pi}\int_{ 0}^{T} \frac{|\zeta^{3}(1/2+it)|}{\sqrt{4t^{2}+1}}\text{d}t&=\frac{2}{\pi}\left(\int_{0}^{200}+\int_{ 200}^{10^{7}}+\int_{10^{7}}^{T}  \right)\frac{|\zeta^{3}(1/2+it)|}{\sqrt{4t^{2}+1}}\, \text{d}t\notag\\
       &:= I_{1}+I_{2}+I_{3}(T). 
       \end{align} 
      Next, we estimate these integrals.
      
      To estimate $I_{1}$, we compute their respective integrals using \textit{Mathematica} version 12.0.0.0. That is,
    \begin{align}\label{fghh}
    I_{1}&=\frac{2}{\pi}\int_{0}^{200} \frac{\left|\zeta^{3}(1/2+it)\right|}{\sqrt{4t^{2}+1}} \, \text{d}t\le 10.457.
        \end{align}
 For $I_{2}$, we use the estimate derived in \cite[Subsection 3.2]{HPY2022} directly. In particular,
 \begin{equation*}
     |\zeta(1/2+it)|\le 0.592t^{1/6}\log t,\ 200\le t<5.5\cdot 10^{7}.
 \end{equation*}
 This gives  
\begin{equation}\label{ewe}  
I_{2}=\frac{2}{\pi}\int_{200}^{10^{7}} \frac{\left|\zeta^{3}(1/2+it)\right|}{\sqrt{4t^{2}+1}}\,\text{d}t
\le\frac{(0.592)^{3}}{\pi}\int_{200}^{10^{7}} \frac{(\log t)^{3}}{\sqrt{t}}\, \text{d}t\le 1.240\cdot 10^{6}.
        \end{equation}
Notice that the sum of $I_{1}$ and $I_{2}$ is at most $1.240\cdot 10^{6}$.
        
 Finally, we estimate $I_{3}$ as follows. We have
\begin{align}\label{vbd}
I_{3}(T)&=\frac{2}{\pi}\int_{10^{7}}^{T} \frac{|\zeta^{3}(1/2+it)|}{\sqrt{4t^{2}+1}}\, \text{d}t.
\end{align}
To estimate the integral on the right-hand side, we  define \begin{equation*}
\phi_{m}(T):=\int_{0}^{T}|\zeta(1/2+it)|^{2m}\, \text{d}t.
\end{equation*}
Applying integration by parts to the integral on the right-hand side of \eqref{vbd} yields 
\begin{equation}\label{vbn}
I_{3}(T)\le \frac{1}{\pi}\left(\frac{\phi_{\frac{3}{2}}(T)}{T}-\frac{\phi_{\frac{3}{2}}(10^{7})}{10^{7}}+\int_{ 10^{7}}^{T}\frac{\phi_{\frac{3}{2}}(t)}{t^2}\, \text{d}t\right).
\end{equation}
By \cite[Corollary 3]{chourasiya2025explicitforminghamszero}, we write 
\begin{align}\label{ggy}
    \phi_{\frac{3}{2}}(t)&\le\left(\frac{1}{2\pi^{2}}+\frac{39.720}{\log T}+\frac{5.817\cdot 10^{12}}{T\log^{4}T}\right)^{\frac{1}{2}}T(\log T)^{5/2}\notag\\
    &\le 3.337T(\log T)^{5/2}
\end{align}
for all $T\ge T_{0}\geq 10^{7}$.
 To evaluate the right-hand side of \eqref{vbn}, we make use of the bound in \eqref{ggy}. This gives
\begin{align}\label{erb}
I_{3}(T) <0.304\log^{\frac{7}{2}}(T)+1.063\log^{\frac{5}{2}}(T)-5101.951.
\end{align}
Finally, substituting 
\eqref{fghh}, \eqref{ewe}, and \eqref{erb} into \eqref{aca} and the result into \eqref{fbi} completes the proof.
\end{proof}
\begin{remark}
    It is worth noting that the bound in \eqref{ggy} exceeds the conjectural bound for the third moment (see \cite{MR623671}) by a factor of $(\log T)^{1/4}$.
\end{remark}
\section{Effective results}
\label{sec:Effective}
Before stating the lemma, we explain its role in our argument. In order to obtain an estimate for $|\Delta_3(x)|$ that is valid only when $x$ is a half-integer or an integer, as restricting to these special values avoids the blow-up that would occur if  $x$ were too close to an integer  in Lemma \ref{lemma:dkGeneral}.
This section is devoted to explicit results from which numerical bounds may be computed for given values of $\varepsilon$, and a prescribed lower bound for $x$. The form of these results depends on both the technique employed to estimate
$\int_{\sigma-iT}^{\sigma+iT} \zeta^{3}(s)\frac{x^s}{s}\,\mathrm{d}s$,
where $\sigma=\frac12$ (see Section~\ref{sec:momentEstimates}), and on whether $x$ is an integer or a half-integer.

\subsection{Cases where $x$ is not an integer or a half-integer} 
This section follows from \cite[Section 4.1]{palojarvi2026explicit}.
We define
\begin{align*}
    u_{8}(\tau,\upsilon, j,x):=& \left(x+\frac{1}{2}\right)^{-\tau}\left(\log{\left(x+\frac{1}{2}\right)}\right)^{-\upsilon}\cdot \\
    &\cdot\begin{cases}
    \sum_{m=1}^{j}\binom{j}{m}\frac{x+1/2}{(2x)^m(\log{(x+1/2)})^{m-1}}, &\text{if } j \geq 1 \\
    \frac{1}{2}, &\text{if } j=0
    \end{cases}
\end{align*}
and
\begin{equation*}
    u_{9}(\tau, \upsilon, j, x):=
    \begin{cases}
        0, &\text{if}\, j=0 \\
        \frac{(\log{(x+1/2)})^j}{2(x+1/2)^{\tau}(\log{(x+1/2)})^{\upsilon}}, &\text{if } 1\leq j<\tau\log{\left(x+\frac{1}{2}\right)}+\upsilon \\
         \frac{1}{2}\left(\frac{j-\upsilon}{\tau}\right)^{j-\upsilon}e^{\upsilon-j}, &\text{if } j\geq \tau\log{\left(x+\frac{1}{2}\right)}+\upsilon.
    \end{cases}
\end{equation*}
 The following lemma bridges the gap by showing that the same form of estimate, with an explicitly computable constant involving the auxiliary functions $u_{8}$ and $u_{9}$, remains valid for all real $x\ge x_{1}+1/2$.
\begin{lemma}(\cite[Lemma 4.1]{palojarvi2026explicit}, $k=3$)
\label{lemma:estimateBetweenPoints}
    Assume that 
    $P_3(x)$ is as in \eqref{eq:asymptotic}, and that
    \begin{equation*}
        \left|\Delta_3(x)\right| \leq
        \begin{cases}
            \rho_{3,1}x^{\tau_{3}}(\log{x})^{\upsilon_{3}}, &\text{if } x \text{ is an integer}, \\
            \rho_{3,2}x^{\tau_{3}}(\log{x})^{\upsilon_{3}}, &\text{if } x \text{ is a half-integer} 
        \end{cases}
    \end{equation*}
    for all $ x\geq x_1>1$ that are integers or half-integers. We also suppose that $\rho_{3,1}, \rho_{3,2},\tau_{3},$ are positive real numbers and $\upsilon_3 \geq 0$. 
    
    Then for all real numbers $x \geq x_1+1/2$, we have
    \begin{equation*}
T_3(x)=xP_3(\log{x})+\Delta_{3,\text{new}}(x),
    \end{equation*}
    where
    \begin{align*}
        \left|\Delta_{3, \text{new}}(x)\right| \leq &\left(\max\{\rho_{3,1}, \rho_{3,2}\}+\sum_{j=0}^{2}|a_j|\left(u_{8}(\tau_3,\upsilon_3,j,x_1)+u_{9}(\tau_3, \upsilon_3, j, x_1)\right)\right) \\
        &\cdot x^{\tau_{3}}(\log{x})^{\upsilon_{3}}
    \end{align*}
    with $a_{0}=\beta$, $a_{1}=\alpha$, and $a_2=1/2$.
\end{lemma}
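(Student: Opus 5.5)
The plan is to interpolate between the lattice of integers and half-integers using only the monotonicity of $T_3$ and the smoothness of the main term $xP_3(\log x)$. Fix a real $x\ge x_1+1/2$. Let $x^-$ be the largest integer or half-integer with $x^-\le x$, and $x^+=x^-+1/2$. Since $x^-\ge x_1$, the hypothesis applies at both points. Because $d_3(n)\ge 0$ and the only integers in $(x^-,x^+)$ could be $x^+$ itself, we have $T_3(x^-)\le T_3(x)\le T_3(x^+)$ (in fact $T_3(x)=T_3(x^-)$ unless $x=x^+$). This gives the two-sided bound
\begin{equation*}
x^-P_3(\log x^-)-xP_3(\log x)+\Delta_3(x^-)\le \Delta_3(x)\le x^+P_3(\log x^+)-xP_3(\log x)+\Delta_3(x^+).
\end{equation*}

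The error terms $\Delta_3(x^\pm)$ are bounded by $\max\{\rho_{3,1},\rho_{3,2}\}(x^\pm)^{\tau_3}(\log x^\pm)^{\upsilon_3}$. For $x^-$ this is at most the same quantity at $x$, since $y^{\tau_3}(\log y)^{\upsilon_3}$ is increasing for $y>1$. For $x^+\le x+1/2$ the ratio $(x^+)^{\tau_3}(\log x^+)^{\upsilon_3}/x^{\tau_3}(\log x)^{\upsilon_3}$ exceeds $1$ only by a tiny factor. I would absorb this excess into the correction terms, or note that the lemma's $u_8$ with $j=0$ already carries the relevant $1/2$ factor.

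The main step is the variation of the main term over an interval of length at most $1/2$. Write $xP_3(\log x)=\sum_{j=0}^2 a_j x\log^j x$ and treat each monomial separately. For $y=x+h$ with $|h|\le 1/2$, expand
\begin{equation*}
\log^j(x+h)=\bigl(\log x+\log(1+h/x)\bigr)^j
\end{equation*}
binomially, using $\log(1+h/x)\le h/x\le 1/(2x)$. Then
\begin{equation*}
(x+h)\log^j(x+h)-x\log^j x\le h\log^j(x+h)+x\sum_{m\ge1}\binom{j}{m}\frac{\log^{j-m}(x+h)}{(2x)^m}.
\end{equation*}
The binomial sum is exactly the sum appearing in $u_8$. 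The term $h\log^j$ is at most $\tfrac12\log^j(x+1/2)$.

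Dividing by $x^{\tau_3}(\log x)^{\upsilon_3}$ gives two ratios. The first is controlled by $u_8(\tau_3,\upsilon_3,j,x)$. The second is $\tfrac12\log^j y/(y^{\tau_3}\log^{\upsilon_3}y)$, which I would bound by maximizing $g(y)=(\log y)^{j-\upsilon_3}y^{-\tau_3}$ over $y\ge x_1+1/2$. Setting $L=\log y$, the function $L\mapsto (j-\upsilon_3)\log L-\tau_3L$ is maximized at $L=(j-\upsilon_3)/\tau_3$, giving the value $\bigl(\frac{j-\upsilon_3}{\tau_3}\bigr)^{j-\upsilon_3}e^{\upsilon_3-j}$. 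Otherwise the function is decreasing and the maximum is at the left endpoint. These are precisely the three cases of $u_9$.

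Finally, to pass from $x$ to $x_1$ I need $u_8(\cdot,x)\le u_8(\cdot,x_1)$, that is, monotonicity in $x$. This holds because each summand decays like $x^{1-m-\tau_3}$ times logarithmic factors, with $m\ge1$. I expect this monotonicity, together with the careful handling of the $x^+$ versus $x$ shift in the $\Delta_3(x^+)$ term, to be the main obstacle. It is routine but fiddly, and it is where the exact shape of $u_8$ and $u_9$ (the evaluation at $x_1+1/2$) must be matched. Summing over $j$ with weights $|a_j|$ then gives the stated constant.
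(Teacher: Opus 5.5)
Your overall strategy is the right one, and it is the argument that the shapes of $u_8$ and $u_9$ encode (the paper gives no proof and only cites the lemma). You pass to the nearest lattice point, then bound the main-term increment by a binomial expansion ($u_8$) plus a term $\frac12\log^j$ maximised over $y\ge x_1+\frac12$ ($u_9$). However, the step you single out as the main obstacle is handled incorrectly, and it is in fact unnecessary.

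The excess $\max\{\rho_{3,1},\rho_{3,2}\}\bigl((x^+)^{\tau_3}(\log x^+)^{\upsilon_3}-x^{\tau_3}(\log x)^{\upsilon_3}\bigr)$ is proportional to $\rho$. The correction terms in the lemma carry weights $|a_j|$ and do not involve $\rho$ at all, so for large $\rho$ this excess cannot be absorbed. Moreover, the $\frac12$ in $u_8(\tau_3,\upsilon_3,0,x_1)$ is already used up by the increment $a_0(x-x^-)$. Your upper bound also adds a nonnegative main-term increment over $[x,x^+]$, so it is weaker than what is true.

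The repair is already in your text. Since $x<x^+$ always, $T_3(x)=T_3(x^-)$ for every real $x$, which gives the identity $\Delta_3(x)=\Delta_3(x^-)-\bigl(xP_3(\log x)-x^-P_3(\log x^-)\bigr)$. So $x^+$ never enters, and $|\Delta_3(x^-)|\le\max\{\rho_{3,1},\rho_{3,2}\}\,x^{\tau_3}(\log x)^{\upsilon_3}$ because $x_1<x^-\le x$.

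Second, expanding about $x$ with $h=x^--x<0$ goes the wrong way. Your displayed inequality bounds the signed difference from above. For $h<0$ that difference is negative, so the bound says nothing about $xP_3(\log x)-x^-P_3(\log x^-)$. Also, $\log(1+h/x)\le h/x$ only gives $\log(x/x^-)\ge (x-x^-)/x$, which is a lower bound. Expand about $x^-$ instead: with $\delta=x-x^-\in[0,\tfrac12)$ and $0\le\log(x/x^-)\le\delta/x^-\le 1/(2x^-)$,
\begin{align*}
x\log^j x-x^-\log^j x^-&=\delta\log^j x+x^-\sum_{m=1}^{j}\binom{j}{m}(\log x^-)^{j-m}\Bigl(\log\frac{x}{x^-}\Bigr)^{m}\\
&\le\frac{\log^j x}{2}+\sum_{m=1}^{j}\binom{j}{m}\frac{x(\log x)^{j-m}}{\bigl(2(x-\tfrac12)\bigr)^{m}},
\end{align*}
using $x-\frac12<x^-\le x$.

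Now divide by $x^{\tau_3}(\log x)^{\upsilon_3}$. The first term becomes $\frac12 g(x)$, with numerator and denominator at the same point (in your version they sat at $x+\frac12$ and $x$), so $u_9$ applies directly. The second term, evaluated at $x=x_1+\frac12$, has exactly the $(x_1+\frac12)/(2x_1)^m$ shape of $u_8$.

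Two smaller points:
\begin{itemize}
\item This yields $(\log(x_1+\frac12))^{j-m}$ in the numerator, which agrees with the displayed $u_8$ (denominator exponent $m-1$) only for $j=1$. For $j=2$ the display appears to be a misprint for $m-j$, and you should say so rather than claim an exact match.
\item The monotonicity in $x$ you invoke is guaranteed once $j-m-\upsilon_3\le\tau_3\log(x_1+\frac12)$, for instance whenever $\upsilon_3\ge1$, as in the application where $\upsilon_3=\frac72$. It can fail for small $\tau_3$ when $\upsilon_3<1$, so it does not hold for every $\tau_3>0$, $\upsilon_3\ge0$.
\end{itemize}
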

\subsection{Effective result based on the third moment}
\label{sec:EffectiveFourth}

In order to present our results, we will define $S(T)$, $u_{10}$, $u_{11}$, $u_{12}$ and $u_{13}$. Let
\begin{align}
    &S(T):=0.304\log^{\frac{7}{2}}T+1.063\log^{\frac{5}{2}}T+1.240\cdot 10^{6}, \label{def:SkNew} \\
    &u_{10}(x,T, \varepsilon, \varepsilon_1,a):=\frac{(1+\pi)x^{1+\varepsilon_1}}{\pi T}\left(u_{2}(x,\varepsilon, 1+\varepsilon_{1}, a)
    +4x(x-1/2)^{\epsilon-1-\varepsilon_{1}}+\right.\notag \\
    &\quad\left.(a+1)x\left(x+\frac{1}{2}\right)^{\varepsilon-1-\varepsilon_{1}}u_{4}(x,a)+\left(2x-\frac{1}{2}\right)\left(\frac{x}{a}\right)^{\varepsilon-1-\varepsilon_{1}}u_{3}(x,a)\right), \label{def:u9}\\
     &u_{11}(f,x, T, T_0,  T_1, \varepsilon, \varepsilon_1, a):=\frac{u_{7}\!\left(x,\tfrac{1}{2},1+\varepsilon_{1},T,T_0,T_1\right)+x^{\frac{1}{2}}S( T_1)}{x^\frac{1}{2}(\log{x})^{\frac{7}{2}}}+\notag\\
    &\quad \hspace{4cm}\frac{u_{10}(x, T, \varepsilon, \varepsilon_1,a)}{x^\frac{1}{2}(\log{x})^{\frac{7}{2}}},\label{def:u12}\\
    &u_{12}(x, T, \varepsilon, \varepsilon_1, a):=\frac{(1+\pi)x^{1+\varepsilon_1}}{\pi T}\left(\left(x-\frac{1}{2}\right)\left(\frac{x}{a}\right)^{\varepsilon-1-\varepsilon_{1}}u_{5}(x,a)
    +\right.\notag \\
    &\quad\left.\hspace{2cm}(x+1)^{\varepsilon-1-\varepsilon_{1}}\left(x+\frac{1}{2}\right)u_{6}(x,a)+u_{2}(x,\varepsilon, 1+\varepsilon_{1}, a)\right), \label{def:u13} \\
    \text{and}\notag\\
    &u_{13}(f,x, T, T_0,  T_1, \varepsilon, \varepsilon_1, a):=\frac{u_{7}\!\left(x,\tfrac{1}{2},1+\varepsilon_{1},T,T_0,T_1\right)+x^{\frac{1}{2}}S( T_1)}{x^\frac{1}{2}(\log{x})^{\frac{7}{2}}}+\notag\\
    &\quad\hspace{4cm} \frac{u_{12}(x, T, T_{0}, \varepsilon, \varepsilon_1,a)}{x^\frac{1}{2}(\log{x})^{\frac{7}{2}}},\label{def:u14}
\end{align}
where the terms from $u_1$ to $u_6$ are defined as in \eqref{u3}--\eqref{m5rr}.

Here, we provide an estimate for $\Delta_{3}(x)$ for all $x$ is small enough.
\begin{theorem}
\label{thm:EffectiveFourth}
  Assume $a \in [1.6, e]$. Let  $\varepsilon$ and $x_0$ be as in Lemma \ref{lemma:dkGeneral}, $\nu_{1}$ and $\nu_{2}$ as in Lemma \ref{Lemu7}, $0.05497<\varepsilon \le 0.4$ and $\varepsilon_1 \in(\varepsilon, 17.590)$. 

   If $x$ is a half-integer or an integer and $x\geq x_1$, then we have 
   \begin{align*}
        &\left|\Delta_3(x)\right|< \omega_1(x_1,\varepsilon, \varepsilon_1,a)x^{\frac{1}{2}}(\log{x})^{\frac{7}{2}}, 
    \end{align*}
    where 
    \begin{align*}
        \omega_1(x_1,\varepsilon, \varepsilon_1,a)&:=\max\left\{u_{11}(S,x_1, T, T_0,  T_1, \varepsilon, \varepsilon_1, a),\,  u_{13}(S,x_1, T, T_0,  T_1, \varepsilon, \varepsilon_1,a)  \right\},
    \end{align*}  
\begin{align*}
&\kappa_1=\frac{\sqrt{2}(2+7\varepsilon_1)\nu_2(1+\varepsilon_1,1+\varepsilon_1,10^{7})^3}{7\cdot 0.304(1+2\varepsilon_1)^{\frac{1}{2}}}, \text{ } 
     \kappa_2=-\frac{1}{2}, \text{ }\kappa_3=\frac{1}{2}+\varepsilon_{1}, \text{ }\\&T=\kappa_1(\log{x_1})^{\kappa_2}x_1^{\kappa_3}, \text{ } T_0=10^{7}, \text{ } T_1=\kappa_{1}x_1^{\kappa_3}
\end{align*}
    and $x_1\geq \max\{ax_0,\, e^{e}+1/2,\, 8.403\cdot 10^{15}\}$.
\end{theorem}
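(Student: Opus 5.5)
The plan is to chain together Lemma \ref{lemma:TkIntegral}, Lemma \ref{lemma:IntegralsToBeEstimated} with $b=1/2$, Lemma \ref{Lemu7} and Lemma \ref{lemma:thirdmoment}, taking $c=1+\varepsilon_1$ and the choice of $T$ stated in the theorem. Fix $x\ge x_1$ that is an integer or a half-integer, and let $T$ and $T_1$ be given by the displayed formulas, but evaluated at $x$ rather than $x_1$. The first step is to check the hypotheses of the lemmas being invoked:
\begin{itemize}
\item $T>c$ and $T_1\ge T\ge T_0=10^7$, which holds because $x_1\ge 8.403\cdot 10^{15}$, $\kappa_3=1/2+\varepsilon_1>1/2$, and $\kappa_1$ is not too small;
\item $x>3a$, $x-1/2\ge e^e$ and $x(1-1/a)\ge 3/2$, which follow from $x_1\ge\max\{ax_0,e^e+1/2\}$ and $a\in[1.6,e]$.
\end{itemize}
Then write
\[
T_3(x)-xP_3(\log x)=\Bigl(T_3(x)-\tfrac{1}{2\pi i}\textstyle\int_{c-iT}^{c+iT}\Bigr)-\tfrac{1}{2\pi i}\Bigl(\textstyle\int_{c+iT}^{1/2+iT}+\int_{1/2-iT}^{c-iT}\Bigr)-I(x,T),
\]
using Lemma \ref{lemma:IntegralsToBeEstimated}. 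The horizontal integrals are at most $u_7(x,\tfrac12,1+\varepsilon_1,T,T_0,T_1)$ by Lemma \ref{Lemu7}. Since $T\le T_1$, the vertical integral is at most $x^{1/2}S(T_1)$ by Lemma \ref{lemma:thirdmoment}. The truncation error comes from Lemma \ref{lemma:TkIntegral} combined with Lemma \ref{lemma:dkGeneral} at $c=1+\varepsilon_1$. In the half-integer case this gives exactly $u_{10}$, after replacing $4x-1$ by $4x$. In the integer case it gives $u_{12}$ plus the extra term $\frac{d_3(x)}{2\pi}\bigl(\pi+\frac{2Tc}{T^2-c^2}\bigr)$. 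That extra term I bound by $d_3(x)\le x^{1.5914\log3/\log\log x}$ from \eqref{fge}, which is $o(x^{1/2}\log^{7/2}x)$ for such large $x$; I absorb it (perhaps with a check) into the slack. Dividing by $x^{1/2}(\log x)^{7/2}$ gives $|\Delta_3(x)|\le u_{11}$ or $u_{13}$, evaluated at $x$.

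The choice of $T$ is the key balancing step. The $\nu_2(c,c,T_0)^3x^c$ branch of $u_7$ contributes about $(c-5/7)\nu_2^3x^{1+\varepsilon_1}(\log T_1)^3/(\pi T)$. The term $u_{10}$ contributes about $x^{1+\varepsilon_1}/T$ times quantities that stay bounded, because $(x/a)^{\varepsilon-c}$ decays. Setting $T\asymp x^{1/2+\varepsilon_1}(\log x)^{-1/2}$ makes both comparable to $x^{1/2}(\log x)^{3+1/2}$, matching the main term $0.304\,x^{1/2}\log^{7/2}T_1$. The constant $\kappa_1$ has the form of the optimal balance between these two pieces: with $\log T_1\approx(1/2+\varepsilon_1)\log x$ one gets $\frac{(2+7\varepsilon_1)}{14}\nu_2^3 x^{1/2}$ times log powers against $0.304\,S$, which explains the factors $\sqrt{2}$ and $(1+2\varepsilon_1)^{1/2}$.

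The last and hardest step is monotonicity: I must show that each normalized piece of $u_{11}$ and $u_{13}$ is non-increasing in $x$ for $x\ge x_1$, so that its value at $x$ is at most its value at $x_1$, and hence at most $\omega_1$. Here is how each piece behaves:
\begin{itemize}
\item $S(T_1)/\log^{7/2}x$ decreases, since $\log T_1=\log\kappa_1+\kappa_3\log x$ and the constant $1.24\cdot10^6$ is eventually dominated. This may require $\log\kappa_1\ge0$, or else a direct check.
\item The terms of $u_7$ carry negative powers of $x$ once $T$ is substituted. For example $T^{-1/2}x^{1/2}$ becomes $x^{1/4-\varepsilon_1/2}$, and dividing by $x^{1/2}$ gives a negative exponent. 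The $x^{5/7}T^{-11/14}$ terms likewise lead to decay, and the condition $\varepsilon>0.05497$ is presumably what guarantees a negative exponent there.
\item The $u_{10}$ and $u_{12}$ pieces involve $x^{1+\varepsilon_1}/T\cdot x^{1+\varepsilon-c}\log x$ and $u_2$. Their monotonicity needs $\varepsilon_1>\varepsilon$, together with careful handling of $u_1$ through the factor $(ax)^{1.5914\log3/\log\log(ax)}$.
\end{itemize}
I expect this verification, carried out term by term with derivatives of logarithms, to be the main technical obstacle.
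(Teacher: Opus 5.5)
Your overall plan is the paper's: Lemma~\ref{lemma:TkIntegral} with Lemma~\ref{lemma:dkGeneral}, Lemma~\ref{lemma:IntegralsToBeEstimated} with $b=1/2$, Lemma~\ref{Lemu7}, Lemma~\ref{lemma:thirdmoment}, $c=1+\varepsilon_1$, $T=\kappa_1(\log x)^{-1/2}x^{1/2+\varepsilon_1}$, then reduction to $x=x_1$ by monotonicity. The gap is that your monotonicity step fails exactly at the two terms you balanced.

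Your claim that every term of $u_7$ carries a negative power of $x$ is false for the $\nu_2(c,c,T_0)^3x^c$ branch. After substituting $T$ and dividing by $x^{1/2}(\log x)^{7/2}$ it equals
\[
\frac{(2/7+\varepsilon_1)\,\nu_2(c,c,10^{7})^3}{\pi\kappa_1}\Bigl(\frac{\log T_1}{\log x}\Bigr)^3,
\qquad
\frac{\log T_1}{\log x}=\kappa_3+\frac{\log\kappa_1}{\log x}.
\]
Likewise, the leading part of $S(T_1)/(\log x)^{7/2}$ is $0.304(\log T_1/\log x)^{7/2}$. The hypothesis $\varepsilon>0.05497$ is there to force $\kappa_1<1$ (see \eqref{def:kappa1FourthFirst}). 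It has nothing to do with the $x^{5/7}T^{-11/14}$ terms, whose normalised exponent is $-5/28-11\varepsilon_1/14<0$ for every $\varepsilon_1$. With $\kappa_1<1$, the ratio $\log T_1/\log x$ increases with $x$. So both leading parts increase, the $u_7$ branch strictly, for all $x$. A term-by-term ``non-increasing'' check therefore cannot succeed, and your fallback $\log\kappa_1\ge 0$ is precisely the case that does not occur.

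The missing idea is the paper's. Since $\kappa_1<1$, bound $T_1=\kappa_1x^{\kappa_3}\le x^{\kappa_3}$ inside $(\log T_1)^3$ and inside $S(T_1)$ \emph{before} normalising. Then $\log T_1/\log x$ is replaced by the constant $\kappa_3$, every remaining piece is non-increasing, and evaluating at $x_1$ is legitimate. This is what \eqref{eq:r11} and \eqref{eq:SkNew} do, which is why they carry $x_1^{\kappa_3}$ in the $T_1$-slot.

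If you insist on $T_1=\kappa_1x_1^{\kappa_3}$ as in the displayed definition, you must show that the supremum over $x\ge x_1$ of the \emph{whole} normalised bound is attained at $x_1$. That is a global comparison, for instance with its limit as $x\to\infty$, and termwise monotonicity does not give it.

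Finally, the integer-case term $\frac{d_3(x)}{2\pi}\bigl(\pi+\frac{2Tc}{T^2-c^2}\bigr)$ cannot simply be ``absorbed into the slack'', because you never exhibit any slack. Handle it explicitly, as the paper does:
\begin{enumerate}
\item bound it using \eqref{fge} and $T\ge T_0$;
\item note that the result divided by $x^{1/2}(\log x)^{7/2}$ is decreasing for $x\ge e^e+1/2$;
\item evaluate it at $x_1$.
\end{enumerate}
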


\begin{proof}
   Similar to the proof in \cite[Theorem 4.2]{palojarvi2026explicit}, our initial analysis is restricted to the case where $x$ is a half-integer. Furthermore, we take $\log{T} =\log{T_1}+O(\log\log{x})$ and $T \geq 10^{7}$.  Combining the estimates established in Lemmas \ref{lemma:TkIntegral}, \ref{lemma:dkGeneral}, \ref{lemma:IntegralsToBeEstimated}, and \ref{Lemu7}, we find that the dominant contributions are of the form $O(x^{c}/T)$:
    \begin{align}
        &\frac{1+\pi}{\pi}\cdot\frac{x^c}{T}\left(\left(2x-\frac{1}{2}\right)\left(\frac{x}{a}\right)^{\varepsilon-c}\log{\left(x-\frac{x}{a}-\frac{1}{2}\right)}+(a+1)x\left(x+\frac{1}{2}\right)^{\varepsilon-c}\log{x}\right. \nonumber \\
       &\hspace{2.5cm}\quad\left.+(4x-1)(x-1/2)^{\varepsilon-c}+u_{2}(x,\varepsilon,c,a)\right),\label{eq:fourthNonIntegerFirst} 
    \end{align}
     $O(T^{-\frac{1}{2}+\varepsilon'}x^{\frac{1}{2}})$, $O(T^{-\frac{11}{14}+\varepsilon'}x^{\frac{5}{7}})$ and $O(T^{-1+\varepsilon'}x^{c})$:
    \begin{multline}
    \frac{(\log T_{1})^3}{T\pi}
    \!\left[
        \frac{3}{14}
        \max\left\{
            \nu_{1}\!\left(\tfrac{1}{2},T_0\right)^3 T^{\frac{1}{2}}x^{\frac{1}{2}},\; 
            \nu_{1}\!\left(\tfrac{5}{7},T_0\right)^3 
            T^{\frac{3}{14}}x^{\frac{5}{7}}
        \right\}\right.\\
        \left.+\left(c-\frac{5}{7}\right)
        \max\left\{
            \nu_{2}\!\left(c,\tfrac{5}{7},T_0\right)^3 
            T^{\frac{3}{14}}x^{\frac{5}{7}},\; 
            \nu_{2}(c,c,T_0)^3 x^c
        \right\}
    \right],\label{cder}
\end{multline}
    and $O(x^{1/2}T^{\varepsilon'})$:
    \begin{equation}
    \label{eq:effectiveFourthHalfInteger}
        0.304x^{\frac{1}{2}}\log^{\frac{7}{2}}T.
    \end{equation}
   To optimize these resulting bound, we take $T=\kappa_1(\log{x})^{\kappa_2}x^{\kappa_3}$, and determine the constants $\kappa_1,\kappa_2, \kappa_3$ for which these bounds are optimal.

    From the terms in the middle of \eqref{cder}, we have 
    \begin{equation*}
        \frac{5}{7}-\frac{11\kappa_{3}}{14}<\min\Bigl\{\frac{1}{2}, \, 1+\varepsilon_{1}-\kappa_{3}\Bigr\}
    \end{equation*}
    for all $\kappa_{3}>3/11$.
    Hence, since $c>1+\varepsilon$, the leading terms from these bounds come from last term in \eqref{cder} and the term appearing in \eqref{eq:effectiveFourthHalfInteger}. Hence, setting $c=1+\varepsilon_{1}$ and $T_{0}=10^{7}$, we have $1+\varepsilon_1-\kappa_3=1/2$ and $3-\kappa_2=7/2$.
    Also, we have
    \begin{align}
    \label{def:kappa1FourthFirst}
       0.304\kappa_{3}^{\frac{7}{2}}&=\frac{\frac{2}{7}+\varepsilon_1}{\kappa_1\pi}(\nu_2(c,c,10^{7})\kappa_3)^3 \notag\\ \quad \kappa_1&=\frac{\sqrt{2}(2+7\varepsilon_1)\nu_2(c,c,10^{7})^3}{7\cdot 0.304\pi(1+2\varepsilon_1)^{\frac{1}{2}}}
    \end{align}
    since $\kappa_{3}=1/2+\varepsilon_{1}$.
    Hence, in our estimate for $|\Delta_3(x)|$, the power of $x$ is $1+\varepsilon_1-\kappa_3=1/2$, and the power of logarithm is $3-\kappa_2 = 7/2$.
  Since $\kappa_2<0$, we discard the logarithmic factor and choose $T_1=\kappa_1x^{\kappa_3}$ to ensure that terms $\log{T_1}/\log{x}$ are decreasing for all $x$. We now identify the leading coefficient $\omega_1$ in the estimate for $|\Delta_k(x)|$.
    
     Next, we focus on deriving the leading constant in the estimate of $|\Delta_{3}(x)|$ and show that it is bounded above by $u_{12}$ from \eqref{def:u12} if $x$ is a half-integer. In addition, we verify that $u_{11}$ is decreasing in $x$, so that its maximum value occurs at $x=x_{1}$. To begin, we consider the terms furnished by Lemma \ref{lemma:dkGeneral}. Note that
    \begin{equation}
    \label{eq:r1inr10}
        \frac{x^c u_{2}(x,\varepsilon, c,a)}{T\cdot x^{c-\kappa_3}(\log{x})^{3-\kappa_2}}=\frac{u_2(x,\varepsilon, c,a)}{\kappa_1(\log{x})^3}
    \end{equation}
    and
    \begin{align}
        &\frac{x^c\cdot (4x-1)(x-1/2)^{\varepsilon-c}}{T\cdot x^{c-\kappa_3}(\log{x})^{3-\kappa_2}}  < \frac{4x}{\kappa_1(\log{x})^3}\cdot \left(x-\frac{1}{2}\right)^{\varepsilon-c} \label{eq:maxn0}
    \end{align}
     since $x$ is a half-integer. Since the right-hand sides of \eqref{eq:r1inr10} and \eqref{eq:maxn0} decrease with $x$ for all $x>1$, their maximal values are attained at $x=x_1$. Therefore, we may replace $x$ by $x_1$ in both expressions. 
     
     Furthermore, we note that
    \begin{equation*}
        \frac{x^{c}\left(2x-\frac{1}{2}\right)u_3(x,a)}{T x^{c-\kappa_3} (\log{x})^{3-\kappa_2}}\left(\frac{x}{a}\right)^{\varepsilon-c}<\frac{2x^{1+\varepsilon}u_3(x,a)}{\kappa_1 x^c(\log{x})^3}a^{c-\varepsilon},
    \end{equation*}
    and the right-hand side is decreasing with respect to $x \geq e^e+1/2$ since $c>1+\varepsilon$, and $a \geq 1.6$. A similar idea applies to the terms coming from $u_4$ in \eqref{m3rr}. Thus, we can conclude that the right-hand side of \eqref{halfintbound} multiplied by $(1+\pi)x^c/(\pi Tx^{c-\kappa_3} (\log{x})^{3-\kappa_2})$ is at most
    \begin{equation}
    \label{eq:r10}
        \leq \frac{u_{10}(x_1, \kappa_1(\log{x_1})^{\kappa_2}x_1^{\kappa_3},\varepsilon, \varepsilon_1,a)}{ x_1^{c-\kappa_3} (\log{x_1})^{3-\kappa_2}}
    \end{equation}
    for $x \geq x_1$. This concludes the case of Lemma \ref{lemma:dkGeneral}.

    Also, we consider the terms coming from Lemma \ref{Lemu7}. Choosing $0.05497<\varepsilon \leq 0.4$, it follows immediately from \eqref{def:kappa1FourthFirst} that  $\kappa_{1}<1$.  
   Hence, for all $x \geq x_1$, we have
    \begin{multline}
    \label{eq:r11}
        \frac{u_{7}(x,1/2,c, \kappa_1(\log{x})^{\kappa_2}x^{\kappa_3},10^{7},\kappa_1(\log{x})^{\kappa_2}x^{\kappa_3})}{x^{c-\kappa_3}(\log{x})^{3-\kappa_2}} \\
        \leq \frac{u_{7}(x_1, 1/2, c, \kappa_1(\log{x_1})^{\kappa_2}x_1^{\kappa_3}, 10^{7},  x_1^{\kappa_3})}{x_1^{c-\kappa_3}(\log{x_1})^{3-\kappa_2}},
    \end{multline}
    since $\kappa_{2}<0$. 

    We consider the terms coming from Lemma \ref{lemma:thirdmoment} similarly. First, we note that $T \geq 10^{7}$, and $\kappa_{2}<0$. Now,  for $\kappa_{1}<1$, we have
    \begin{equation}
    \label{eq:SkNew}
        \frac{x^{1/2}S(T)}{x^{c-\kappa_3}(\log{x})^{3-\kappa_2}} \leq \frac{S(x_1^{\kappa_3})}{x_1^{c-\kappa_3}(\log{x_1})^{3-\kappa_2}}
    \end{equation}
 for all $x\geq x_1$, where $S(T)$ is as in \eqref{def:SkNew}. Combining estimates \eqref{eq:r10}, \eqref{eq:r11} and \eqref{eq:SkNew}, we obtain that the constant term is $u_{11}$ if $x$ is a half-integer.
 
    Finally, we consider the integer case. Note that the parameters $c$ and $T_0$ are chosen exactly as before. The only differences from the preceding analysis are that \eqref{eq:integerdSumFirst} replaces \eqref{eq:TkSum} in Lemma \ref{lemma:TkIntegral}, and \eqref{halfintbound} replaces \eqref{out} in Lemma \ref{lemma:dkGeneral}. Consequently, by Definition \eqref{fge} and the condition $T\geq T_0$, we obtain
    \begin{equation}
    \label{eq:dkIntegerCase}
        \frac{d_3(x)}{2\pi}\left(\pi+\frac{2Tc}{T^2-c^2}\right) \leq \frac{x^{\frac{1.5914\log 3}{\log\log{x}}}}{2\pi}\left(\pi+\frac{2T_0c}{T_0^2-c^2}\right). 
    \end{equation}
    The same choices of $\kappa_1$, $\kappa_2$, and $\kappa_3$ apply. For
$x \geq e^{e}+1/2$, the right-hand side of \eqref{eq:dkIntegerCase}
divided by $x^{\frac12}(\log x)^{\frac72}$ is decreasing for all
$x \geq x_1$. Hence, evaluating at $x=x_1$ yields an upper bound. The
constant term is handled as before, with $u_{10}$ replaced by $u_{12}$,
thereby giving the constant $u_{13}$.

     Lastly, since $T_0 \geq 10^{7}$ in Lemma \ref{lemma:thirdmoment}, we must have
    \begin{equation*}
       x_1 \geq \left(\frac{10^{7}(7\cdot0.304\pi(1+2\varepsilon_{1})^{1/2})}{\sqrt{2}(2+7\varepsilon_1)\nu_{2}(c,c,10^{7})^3}\right)^{2}, 
    \end{equation*}
    and by Lemma \ref{lemma:dkGeneral}, we require that $x_1 \geq e^e+1/2$. However, since $\varepsilon_1 \leq 0.44$ we have
    \begin{equation*}
        \left(\frac{10^{7}(7\cdot0.304\pi(1+2\varepsilon_{1})^{1/2})}{\sqrt{2}(2+7\varepsilon_1)\nu_{2}(c,c,10^{7})^3}\right)^{2} <\left(10^{7}(7\cdot0.304\pi(1+2\varepsilon_{1})^{1/2})\right)^{2}<8.403\cdot 10^{15}.
    \end{equation*}
     Hence, we can conclude
    \begin{equation*}
        \max\left\{8.403\cdot 10^{15},  \left(\frac{10^{7}(7\cdot0.304\pi(1+2\varepsilon_{1})^{1/2})}{\sqrt{2}(2+7\varepsilon_1)\nu_{2}(c,c,10^{7})^3}\right)^{2} \right\}=8.403\cdot 10^{15},
    \end{equation*}
    and use that in the lower bound for $x_1$.
\end{proof}
\begin{remark}
 Different explicit bounds for the third-moment of $\zeta(1/2+it)$   exist for various lower bounds on $T$. However, applying the version valid for $T\ge 1.1\cdot 10^{42}$, as in Lemma \ref{lemma:thirdmoment} and Theorem \ref{thm:EffectiveFourth}, does give an explicit bound for $\Delta_{3}(x)$,  but at the cost of a much larger constant, since the last term in the bound of Lemma \ref{lemma:thirdmoment} is itself considerably larger than the corresponding term in our case.
\end{remark}

\section{Proof of Theorem \ref{Final}}\label{Proof}
In this section, we present the proof of Theorem~\ref{Final} for  sufficiently small values of $x$, with all numerical computations carried out using \textit{Mathematica} $12.0.0.0$. Notice that we take $x_{0}=\max\{4,\ceiling{\exp(\exp(1.5914/\varepsilon))}\}$.

 From Theorem~\ref{thm:EffectiveFourth}, we set $a=1.6$ and choose $\varepsilon_{1}$ such that $x_{1}$ is small enough. In order to present the best estimate for $|\Delta_{3}(x)|$ in different ranges of $x$, we vary the choices of $\varepsilon$ and $\varepsilon_{1}$ and present the result in the table below.  
 \begin{center}
\begin{threeparttable}
\caption{Results for Theorem \ref{thm:EffectiveFourth} if $x \geq x_1$ is an integer or a half-integer.}\label{Table}
\begin{tabular}{ |c|c| } 
 \hline
 $(x_1,\varepsilon,\varepsilon_1,a)$ & $|\Delta_3(x)| <$... \\ \hline
 $(3.682\cdot 10^{31},0.371855,0.38,1.6)$ & $0.6901x^{\frac{1}{2}}(\log{x})^{\frac{7}{2}}$ \\ \hline
 $(4.133\cdot 10^{87}, 0.300000,0.31,1.6)$  & $0.2067x^{\frac{1}{2}}(\log{x})^{\frac{7}{2}}$ \\ \hline
 $(1.600\cdot 10^{98},0.293671,0.30,1
 .6)$  & $0.1947x^{\frac{1}{2}}(\log{x})^{\frac{7}{2}}$ \\ \hline
 \end{tabular}
\end{threeparttable}
\end{center} 

\section{Conclusion}\label{Conc}
In this paper, we employed the approach of \cite[Theorem 12.3]{MR882550} to derive an explicit upper bound for the error term $\Delta_3(x)$ by applying the explicit third moment estimate in \cite[Corollary 3]{chourasiya2025explicitforminghamszero}, which itself was obtained by combining the known explicit second and fourth moment estimates via H{\"o}lder's inequality. For instance, one can loosely combine an explicit pointwise bound $|\zeta(1/2+it)|\le f(t)$ with an explicit estimate for the fourth moment to obtain an estimate for the fifth moment. That is,
\begin{equation*}
    \phi_{5/2}(T)=\int_{0}^{T}|\zeta(1/2+it)|^{5}\,\mathrm{d}t\le \sup_{\substack{t\in[0,T]}}f(t)\,\phi_{2}(T).
\end{equation*}
This chain of dependencies via Perron's formula illustrates a broader principle that explicit moment estimates for $|\zeta(\frac{1}{2}+it)|$ can be combined to yield explicit bounds for higher moments, and hence explicit error term estimates for specific value of $k$ in the generalised divisor problem.

\section*{Acknowledgment}
I would like to thank Andrew Fiori and Neea Paloj\"arvi for suggesting this research direction, and Neea Paloj\"arvi for her continued support, insightful discussions, and constructive feedback throughout this work. I also thank Tim Trudgian for valuable discussions that contributed to the improvement of this manuscript.
\printbibliography
\end{document}